\documentclass[reqno]{amsart}
\usepackage{hyperref}

\begin{document}
\title[\hfilneg  \hfil Nonlinear Love-equation with infinite memory]
{General decay rate of solution for nonlinear Love-equation with infinite memory}

\author[Khaled Zennir \hfil 11-2018\hfilneg]
{Khaled Zennir}

 \address{Khaled zennir \newline
 	First address: Laboratory LAMAHIS, University 20 Ao\^ut 1955- Skikda, 21000, Algeria\newline
 	Second address:  Department of Mathematics, College of
 	Sciences and Arts, Al-Ras, Qassim University, Kingdom of Saudi
 	Arabia.}
 \email{khaledzennir4@yahoo.vom}

\thanks{ }
\subjclass[2010]{35L20, 35L70, 37B25, 93D15}
\keywords{Nonlinear Love-equation; Existence; General Decay rate; Infinite Memory}

\begin{abstract}
 In this paper, we consider a nonlinear Love-equation with infinite memory. By certain properties of convex functions, we use an appropriate Lyapunov functional to find a very general rate of decay for energy (\ref{Energy}).
\end{abstract}

\maketitle
\numberwithin{equation}{section}
\newtheorem{theorem}{Theorem}[section]
\newtheorem{Proposition}[theorem]{Proposition}
\newtheorem{Definition}[theorem]{Definition}
\newtheorem{lemma}[theorem]{Lemma}
\newtheorem{remark}[theorem]{Remark}
\newtheorem{example}[theorem]{Example}
\allowdisplaybreaks
\section{Introduction}
Denote $y=y(x,t)$, $y'=y_{t}=\frac{\partial y}{\partial t}
(x,t), y''=y_{tt}=\frac{\partial^2y}{\partial t^2}(x,t), y_{x}=\frac{\partial y}{\partial x}(x,t), y_{xx}=\frac{\partial^2y}{\partial x^2}(x,t), x\in \Omega =(0,L), L>0, t>0$. In this article, we consider a nonlinear Love-equation in the form
\begin{gather}
\begin{aligned}
& y''-  \big(y_{x}+y'_x+y''_{x}\big)_x  + \int_{-\infty}^{t}\mu(t-s)y_{xx}(s)ds  \\
&=F[y]  -\Big(F[y] \Big)_x    +f(x,t),\quad x\in \Omega,\; 0<t<T,
\end{aligned} \label{1}
\end{gather}
where
\begin{equation} \label{b5}
F[y] = F\Big( x, t, y, y_{x}, y', y_{x}'\Big) \in C^{1}\Big([0,1]\times\mathbb{R}^+\times\mathbb{R}^4\Big),
\end{equation}
The given functions $\mu, f$ are specified later.
With $F=F(x,t,y_1,\dots,y_4)$, we put
$D_1F=\frac{\partial F}{\partial x}$,
$D_2F=\frac{\partial F}{\partial t}$,
$D_{i+2}F=\frac{\partial F}{\partial y_i}$, with $i=1,\dots,4$.\\
Equation (\ref{1}) satisfies the homogeneous Dirichlet boundary conditions:
\begin{equation}\label{2}
y(0,t)=y(L,t)=0, \qquad t>0,
\end{equation}
and the following initial conditions
\begin{equation}\label{3}
y(x,-t)=y_0(x,t),\quad y'(x,0)=y_1(x).
\end{equation}
To deal with a wave equation with infinite history, we assume that the kernel function $\mu$ satisfies the following hypothesis:\\
(Hyp1:) $\mu: \mathbb{R}^+ \rightarrow \mathbb{R}^+$ is a non-increasing $C^1$ function such that
\begin{eqnarray}
1- \int_{0}^{\infty}\mu(s)ds=l>0, \quad \mu(0)>0. \label{Cond2g}
\end{eqnarray}
and there exists an increasing strictly convex function $H: \mathbb{R}^+ \rightarrow \mathbb{R}^+$ of $C^1(\mathbb{R}^+)$ satisfying
\begin{eqnarray}
H(0)=H'(0)=0,\quad and \quad \lim\limits_{t \rightarrow \infty} H'(t)=\infty.
\end{eqnarray}
such that
\begin{eqnarray}
\int_{0}^{\infty}\frac{\mu(s)}{H^{-1}(-\mu'(s))}ds+\sup_{s\in \mathbb{R}^+}\frac{\mu(s)}{H^{-1}(-\mu'(s))}ds<\infty \label{H}
\end{eqnarray}
\begin{itemize}
	\item[(Hyp2:)]  $y_0(0), y_1\in H_0^1\cap H^2$; 
\end{itemize}
We need the following assumptions on source forces:\\
\begin{itemize} 
	\item[(Hyp3:)]  $f\in H^1((0,1)\times (0,T))$;
	 	
	\item[(Hyp4:)]  $F\in C^1\Big([0,1]\times [ 0,T]\times\mathbb{R}^4\Big)$, such that $F(0,t,0, y_2,0,y_4)=F(1,t,0, y_2,0,y_4)=0$ for all $t\in [0,T], \qquad y_2,y_4\in\mathbb{R}$.
\end{itemize}
This kind of systems appears in the models of nonlinear Love waves or Love type waves. It is a generalization of a model introduced by \cite{Love1972}, \cite{Love1964} and \cite{Love1978}. The original equation is
\begin{eqnarray}
u''-\frac{E}{\rho}u_{xx}-2\mu^2w^2u''_{xx}=0, \label{1.4}
\end{eqnarray}
This type of problem describes the vertical oscillations of a rod, was established from Euler's variational equation of an energy functional associated with (\ref{1.4}). A classical solution of problem (\ref{1.4}) with null boundary conditions and asymptotic behavior are obtained by using the Fourier method and method of small parameter. The results are very  interesting in the application point of view and, ad for as I know, new that is there is no results for equations of Love waves or Love type waves with the presence of finite/infinite memory term (\cite{Love2009}, \cite{Love1972}, \cite{Love2013}, \cite{Love2014}, \cite{Love2015}, \cite{Love2016}, \cite{Love1964}, \cite{Love1978}, \cite{Love2017}, \cite{Love2018}, \dots).\\
Without infinite memory term, when $\lambda=0$ in (\ref{1}), Triet and his collaborator in \cite{Love2016} considered an IBVP for a nonlinear Kirchhoff-Love equation
\begin{gather}
	\begin{aligned}
		&u_{tt}-\frac{\partial }{\partial x}   
		\big(u_{x}+\lambda _1u_{xt}+u_{xtt}\big)  +\lambda u_{t}  =F\big( x,t,u,u_{x},u_{t},u_{xt} \big)    \\
		&\quad-\frac{\partial }{\partial x}\big[ G\big(
		x,t,u,u_{x},u_{t},u_{xt} \big) \big]     +f(x,t),\quad x\in \Omega =(0,1),\; 0<t<T,
	\end{aligned}   \\
	u(0,t)=u(1,t)=0,   \\
	u(x,0)=\tilde{u}_0(x),\quad u_{t}(x,0)=\tilde{u}_1(x),   
\end{gather}
where $\lambda >0$, $\lambda _1>0$ are constants and
$\tilde{u}_0, \tilde{u}_1\in H_0^1\cap H^2;$ $f$, $F$ and $G$ are given functions. First, under suitable conditions, the existence of a unique local weak solution has been proved and a blow up result for solutions with negative initial energy is also established. A sufficient condition guaranteeing the global existence and exponential decay of
weak solutions is given in the last section. This results will be improved in \cite{Love2017}, \cite{Love2018} to the Kirchhoff typ.\\
The existence/ nonexistence, exponential decay of solutions and blow up results for viscoelastic wave equations with finite history, have been extensively studied and many results have been obtained by many authors (see \cite{FFV2009}, \cite{FV1968}, \cite{FV2001}, \cite{FV2009}, \cite{FV2012}, \cite{FV2003}, \cite{FV2008}, \cite{FV2015}, \cite{FV2017} \dots).\\
Concerning problems with infinite history, we mention the work \cite{FV2001} in which considered the following semi-linear hyperbolic equation, in a bounded domain  of $ \mathbb{R}^3$,
$$
u''-K(0)\Delta u-\int_{0}^{\infty}K'(s)\Delta u(t-s)ds+g(u)=f,
$$
with $K(0), K(\infty)>0, K'\leq 0$ and gave the existence of global attractors for the problem. Next in \cite{FG2017}, the authors considered a fourth-order suspension bridge equation with nonlinear damping term and source term.  The authors found necessary and sufficient condition for global existence and energy decay results without considering the relation between $m$ and $p$. Moreover, when   $p>m$, they gave sufficient condition for finite time blow-up of solutions. The lower bound of the blow-up time is also established.\\
Recently, in \cite{IIV2018}, the authors studied a three-dimensional (3D) visco-elastic wave equation with nonlinear weak damping, supercritical sources and prescribed past history, $t\leq0$ in
$$
u''-k(0)\Delta u-\int_{0}^{\infty}k'(s)\Delta u(t-s)ds+\vert u'\vert^{m-1}u'=\vert u\vert^{p-1}u,
$$
where the relaxation function $k$ is monotone decreasing with $k(+\infty)=1, m\geq1, 1\leq p<6.$ When the source is stronger than dissipations, i.e. $p>\max\{m, \sqrt{k(0)}\}$, they obtained some finite time blow-up results with positive initial energy. In particular, they obtained the existence of certain solutions which blow up in finite time for initial data at arbitrary energy level. (see \cite{IV2012}, \cite{IV2013}, \cite{IV2018}, \cite{IIV2018}, \cite{IV2017}, \dots).\\
In addition to the existence results in Theorem \ref{thm2.2} and Theorem \ref{global theorem} obtained by a new combined methods, our decay rate, which is very general, obtained in  the fourth section, Theorem \ref{decay theorem} extend that obtained in \cite{FV2015} and \cite{IV2012}, where they established a general decay rate result for relaxation functions satisfying 
\begin{equation}
g'(t)\leq -H(g(t)), t\geq 0,\quad H(0)=0
\end{equation} 
for a positive function $H \in C^{1}(\mathbb{R}^{+})$ and $H$ is linear or strictly increasing and strictly convex $C^{2}$ function on $(0,r], 1>r$. This improves the conditions introduced by \cite{FFV2009} on the relaxation functions
\begin{eqnarray}
g'(t)\leq -\chi(g(t)),\quad \chi(0)=\chi'(0)=0
\end{eqnarray}
where $\chi$ is a non-negative function, strictly increasing and strictly convex on $(0,k_{0}], k_{0}>0$. They required that
\begin{eqnarray}
\int_{0}^{k_{0}}\frac{dx}{\chi(x)} =+\infty,\quad \int_{0}^{k_{0}}\frac{xdx}{\chi(x)} <1,\quad \lim\limits_{s\rightarrow 0^{+}} \inf\frac{\chi(s)/s}{\chi'(s)}>\frac{1}{2}
\end{eqnarray}
and proved a decay result for the energy in a bounded domain. In addition to these assumptions, if 
\begin{eqnarray}\lim\limits_{s\rightarrow 0^{+}} \sup\frac{\chi(s)/s}{\chi'(s)}<1,
\end{eqnarray}
then, in this case, an explicit rate of decay is given.
\section{The existence of solution} 
We define the weak solution to of  \eqref{1}--\eqref{3} as follows.
\begin{Definition}
	A function $y$ is said to be a weak solution of \eqref{1}--\eqref{3} on $[0,T]$ if  $$ y, y', y''\in L^{\infty }( 0,T ;H_0^1\cap H^2),$$ such that $y$
	satisfies the variational equation
	\begin{equation}
	\begin{aligned}
	&\int_{\Omega} y'' w\  dx +\int_{\Omega} ( 
	y_{x} +y_{x}' +y_{x}'' ) w_{x}dx\\
	&-\int_{\Omega}\int_{0}^{\infty}\mu(s)y_{x}(t-s)dsw_xdx  \\
	&=\int_{\Omega} f wdx +\int_{\Omega}
	F[y] wdx +\int_{\Omega} F[y] w_{x}dx,
	\end{aligned}  \label{b3}
	\end{equation}
	for all test function $w\in H_0^1$, for almost all $t\in (0,T)$.
\end{Definition}	
The following famous and widely used technical lemma will play an important role in the sequel.
\begin{lemma}\label{lemg} [\cite{IVLove2018}]
	For any $v\in C^{1}\left( 0,T,H^{1}_0\right) $\ we have%
	\begin{eqnarray}
	&&\int_{\Omega} \int_{0}^{\infty}\mu(s)v_{xx}(t-s)v'(t)dsdx\nonumber\\
	&=&\frac{1}{2}\frac{d}{dt} \int_{0}^{\infty}\mu(s)\int_{\Omega}\vert
	v(t-s)-v(t)\vert ^2dxds-\frac{1}{2}\frac{d}{dt} \int_{0}^{\infty}\mu(s)ds\int_{\Omega}\left\vert v_x(t)\right\vert ^{2}dx \nonumber \\
	&&-\frac{1}{2} \int_{0}^{\infty}\mu'(s)\int_{\Omega}\vert
	v(t-s)-v(t)\vert ^2dxds. \nonumber
	\end{eqnarray}
\end{lemma}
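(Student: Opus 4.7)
The plan is to reduce the identity to a pointwise (in $x$) differential identity on $(0,\infty)$ in the memory variable $s$, and then integrate by parts once in $x$ and once in $s$. The whole argument rests on the algebraic observation that, writing $w(t,s)=v_x(t)-v_x(t-s)$, one has
\begin{equation*}
v'_x(t)=\bigl(\partial_t+\partial_s\bigr)w(t,s),
\qquad\text{so that}\qquad
w(t,s)\,v'_x(t)=\tfrac12\bigl(\partial_t+\partial_s\bigr)|w(t,s)|^2 .
\end{equation*}

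First I would integrate by parts in $x$. Since $v(t),v'(t)\in H^1_0(\Omega)$, the boundary terms vanish and
\begin{equation*}
\int_{\Omega}\mu(s)\,v_{xx}(t-s)\,v'(t)\,dx
=-\int_{\Omega}\mu(s)\,v_x(t-s)\,v'_x(t)\,dx .
\end{equation*}
Next I would add and subtract $v_x(t)$ in the first factor to obtain
\begin{equation*}
-v_x(t-s)v'_x(t)=\bigl(v_x(t)-v_x(t-s)\bigr)v'_x(t)-v_x(t)v'_x(t),
\end{equation*}
and integrate in $s$ against $\mu(s)$. The local term produces $-\tfrac12\tfrac{d}{dt}\!\int_0^\infty\mu(s)\,ds\int_\Omega|v_x(t)|^2dx$, which is exactly the middle term on the right-hand side of the claimed identity.

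For the remaining ``difference'' term I would invoke the algebraic observation above to write
\begin{equation*}
\mu(s)\bigl(v_x(t)-v_x(t-s)\bigr)v'_x(t)
=\tfrac12\,\mu(s)\,\partial_t|w(t,s)|^2+\tfrac12\,\mu(s)\,\partial_s|w(t,s)|^2 .
\end{equation*}
Integrating over $\Omega$ and then in $s\in(0,\infty)$, the $\partial_t$ contribution can be pulled outside the time derivative (Fubini / Leibniz), yielding the first term on the right-hand side. For the $\partial_s$ contribution I would integrate by parts in $s$:
\begin{equation*}
\tfrac12\int_0^\infty\!\mu(s)\,\partial_s\!\!\int_\Omega|w|^2dx\,ds
=\tfrac12\Bigl[\mu(s)\!\int_\Omega|w|^2dx\Bigr]_{s=0}^{s=\infty}
-\tfrac12\int_0^\infty\!\mu'(s)\!\int_\Omega|w|^2dx\,ds .
\end{equation*}
The boundary contribution at $s=0$ vanishes because $w(t,0)\equiv 0$, and the one at $s=\infty$ vanishes by the decay of $\mu$ together with the integrability furnished by hypothesis (Hyp1). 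What survives is exactly the third term in the statement, completing the identity.

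The main technical obstacle is the justification of the Fubini/Leibniz interchanges and of the vanishing boundary term at $s=\infty$: one needs enough integrability of $s\mapsto\mu(s)\int_\Omega|w(t,s)|^2dx$ to differentiate under the integral and to discard the contribution at infinity. This is supplied by the standing assumption $\int_0^\infty\mu(s)\,ds<\infty$ in (Hyp1) together with the a priori bound $v\in C^1(0,T;H^1_0)$, which control the history term uniformly in $s$. Once these interchanges are licit, the proof is purely computational in the manner above.
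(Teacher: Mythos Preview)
Your argument is correct and is the standard derivation of this identity. The paper does not supply its own proof but merely cites the companion preprint, so there is nothing to compare against beyond noting that your route---integrate by parts in $x$, split off the local term, then use $(\partial_t+\partial_s)|v_x(t)-v_x(t-s)|^2=2(v_x(t)-v_x(t-s))v'_x(t)$ and integrate by parts in $s$---is exactly the classical one.

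One discrepancy worth flagging: your computation yields $|v_x(t)-v_x(t-s)|^2$ in the first and third terms on the right-hand side, whereas the lemma as printed writes $|v(t-s)-v(t)|^2$ without the spatial derivative. This is a typographical slip in the statement; the energy functional and every subsequent application of the lemma in the paper involve $|y_x(t)-y_x(t-s)|^2$, confirming that the $v_x$ version you derived is what is actually intended and used.
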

Now, we state the existence of a local solution for \eqref{1}--\eqref{3}. 
\begin{theorem} \label{thm2.2} [\cite{IVLove2018}, Theorem 2.3]
Let $y_0(0), y_1\in H_0^1\cap H^2$ be given. Assume that {\rm(Hyp1)--(Hyp4)} hold. Then Problem
\eqref{1}--\eqref{3}  has a unique local solution $y$ and
\begin{equation}
\begin{gathered}
y, y', y''\in L^{\infty }( 0,T_{\ast };H_0^1\cap H^2),
\end{gathered}  \label{b6}
\end{equation}
for some $T_{\ast }>0$ small enough.
\end{theorem}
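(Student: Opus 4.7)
The plan is to combine a Galerkin approximation for a suitable linearized problem with a contraction-mapping argument in a closed ball of the natural function space, which is the standard scheme for nonlinear Kirchhoff--Love type IBVPs. First I would recast the infinite-memory integral so that the prescribed past enters as data: using the substitution $s \mapsto t-s$,
\begin{equation*}
\int_{-\infty}^{t}\mu(t-s) y_{xx}(s)\,ds = \int_{0}^{t}\mu(s) y_{xx}(t-s)\,ds + \int_{t}^{\infty}\mu(s) (y_0)_{xx}(s-t)\,ds,
\end{equation*}
where the second term is a bounded source in $L^\infty(0,T; L^2(\Omega))$ by (Hyp1) and (Hyp2); merging it with $f$ leaves a problem with a standard finite convolution.

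For fixed $v$ in the closed ball
\begin{equation*}
W(M,T)=\{\, v : v, v', v'' \in L^{\infty}(0,T; H_0^1 \cap H^2),\ \|v\|_{W}\le M,\ v(0)=y_0(0),\ v'(0)=y_1\,\},
\end{equation*}
I would solve the linear problem obtained by freezing $F[\,\cdot\,]$ at $v$, with right-hand side $F[v]-(F[v])_x+\tilde f$. A Faedo--Galerkin scheme based on the Dirichlet Laplacian eigenbasis $\{w_k\}$ reduces this to a well-posed ODE system, since the operator $I-\partial_x^2$ implicit in $y''-y''_{xx}$ is invertible on $H_0^1$. For the a priori bounds I would test the Galerkin equation with $y'_n$ and then, after differentiating in $t$, with $y''_n$; at each step Lemma \ref{lemg} rewrites the memory term as the time derivative of a modified energy plus $-\tfrac{1}{2}\int_0^\infty \mu'(s)\int_\Omega |y_n(t-s)-y_n(t)|^2\,dx\,ds$, which has the good sign $\ge 0$ because $\mu'\le 0$. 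Combined with the strong damping $\|y'_{n,x}\|_{L^2}^2$ and $\|y''_{n,x}\|_{L^2}^2$ supplied by the terms $-y'_{xx}$ and $-y''_{xx}$, and with (Hyp2)--(Hyp4), this yields uniform-in-$n$ bounds on $y_n,y'_n,y''_n$ in $L^{\infty}(0,T;H_0^1\cap H^2)$.

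Passing to the limit $n\to\infty$ via weak-$*$ compactness and using the continuity of $F$ supplied by (Hyp4) produces a solution of the linearized problem, defining a solution map $\mathcal{T}:v\mapsto y$. I would then check that for $M$ large enough and $T_\ast$ small enough (depending only on the data), $\mathcal{T}$ maps $W(M,T_\ast)$ into itself and is a strict contraction in the weaker norm $\|u\|_{L^\infty(0,T_\ast;H_0^1)}+\|u'\|_{L^\infty(0,T_\ast;L^2)}$, using the Lipschitz property of $F$ from (Hyp4) together with the embedding $H^2 \hookrightarrow C^1([0,L])$. Banach's fixed point theorem then supplies a solution in $W(M,T_\ast)$, and uniqueness follows from the same contraction estimate applied to the difference of two solutions.

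The main obstacle, in my view, is the higher-order a priori bound producing $y''\in L^\infty(0,T_\ast;H^2)$: differentiating the equation generates a term whose control essentially requires working at the level of the fourth-order operator $(I-\partial_x^2)^2 y''$, and the memory contribution must be handled through Lemma \ref{lemg} so that $\mu'\le 0$ yields a good sign rather than being absorbed into a Gr\"onwall estimate -- the latter would fail because the prescribed history only lies in $H_0^1\cap H^2$ and supplies no better than an $L^\infty_t L^2_x$ forcing. Once this estimate closes, the fixed-point step reduces to elementary convolution estimates using the $L^1$ bound on $\mu$ from (Hyp1).
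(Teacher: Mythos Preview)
The paper does not actually prove Theorem~\ref{thm2.2}: it merely states the result and cites it from the preprint \cite{IVLove2018}, so there is no proof in the present paper to compare against. Your proposal is the standard scheme used in the Love-equation literature (e.g.\ \cite{Love2016}, \cite{Love2018}) on which \cite{IVLove2018} is modelled---linearise, run a Faedo--Galerkin approximation with energy estimates at two levels, pass to the limit, and close with a Banach contraction on a ball $W(M,T)$---and it is almost certainly the method of the cited preprint as well.

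Your sketch is sound and the decomposition of the infinite memory into a finite convolution plus a history forcing is exactly the right first move. One remark on the obstacle you flag: to reach $y''\in L^{\infty}(0,T_\ast;H_0^1\cap H^2)$ you do not need to differentiate in $t$ and then test with $y''$; in these Love-type problems the customary second estimate is obtained by testing the (undifferentiated) Galerkin equation with $-y''_{xx}$, which directly controls $\|y''_x\|_{L^2}$ and $\|y''_{xx}\|_{L^2}$ thanks to the presence of $-y''_{xx}$ in the operator. This avoids introducing a time derivative of the memory term and keeps the use of Lemma~\ref{lemg} straightforward. With that adjustment the a~priori bounds close by Gr\"onwall exactly as you describe, and the contraction step in the weaker norm is routine.
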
 
Here, we consider problem (\ref{1}) with $p\geq2$ and $f\equiv 0$ with the boundary conditions (\ref{2}) and the initial conditions (\ref{3}).  
We introduce the energy functional $E(t)$ associated with our problem
\begin{equation}
\begin{aligned}
&E(t)= \frac{1}{2} \int_{\Omega} \vert y'\vert^2  dx + \frac{1}{2}   \int_{\Omega} \vert y'_x\vert^2  dx+J(t).
\end{aligned}  \label{Energy}
\end{equation}
where
\begin{eqnarray}
J(t)&=&\frac{1}{2}\int_{\Omega}\Big(1- \int_{0}^{\infty}\mu(s)ds\Big)\vert y_{x}\vert^2   dx\nonumber\\
&+&\frac{1}{2} \int_{\Omega}\int_{0}^{\infty}\mu(s)\vert y_x(t)-y_{x}(t-s)\vert ^2ds dx \nonumber\\
&+&\frac{1}{p}\int_{\Omega} \vert y _x \vert ^pdx- \frac{1}{p}\int_{\Omega}\vert y \vert ^pdx\label{J(t)}
\end{eqnarray}
Now, we introduce the stable set as follows (see\cite{FG2013}, \cite{SV2015})
\begin{eqnarray}
W=\{y\in H_{0}^{1}\cap H^2: I(t)>0, J(t)<d\} \cup\{0\}\label{W}
\end{eqnarray}
where
\begin{eqnarray}
I(t)&=&\int_{\Omega} \Big(1-  \int_{0}^{\infty}\mu(s)ds\Big)\vert y_{x}\vert^2   dx\nonumber\\
&+& \int_{\Omega}\int_{0}^{\infty}\mu(s)\vert y_x(t)-y_{x}(t-s)\vert ^2ds dx \nonumber\\
&+& \int_{\Omega} \vert y_x \vert ^pdx- \int_{0}^{L} \vert y \vert ^pdx \label{I(t)}
\end{eqnarray}
We notice that the mountain pass level $d$ given in (\ref{W}) defined by
\begin{eqnarray}
d &=&\inf\{\sup_{y \in H_{0}^{1}\cap H^2 \backslash \{0\}, \nu \geq
	0}J\left( \nu y \right)\},
\label{d_constantch61}
\end{eqnarray}
Also, by introducing the so called "\textit{Nehari manifold}"
\begin{eqnarray}
\mathcal{N}=\left\{y\in  H_{0}^{1}\cap H^2 \backslash \left\{ 0 \right\} :I\left( t\right)
=0\right\}\nonumber
\end{eqnarray}
It is readily seen that the potential depth $d$ is also characterized by
\begin{eqnarray}
d=\inf_{y\in \mathcal{N}}J(t).
\end{eqnarray}
This characterization of $d$ shows that
\begin{eqnarray}
dist\left(0 ,\mathcal{N}\right) =\min_{
	y \in \mathcal{N}}\left\Vert y \right\Vert _{H_{0}^{1}\cap H^2}
\end{eqnarray}
It is note hard to see this Lemma.
\begin{lemma} \label{lemE'}
	Suppose that (Hyp1) holds. Let $y$ be solution of our equation.  Then the energy functional (\ref{Energy}) is a non-increasing function, i.e., for all $t\geq 0, \nu>0$,
	\begin{eqnarray}
	\frac{d}{dt}E(t)&\leq&- \int_{\Omega} \vert y'_x\vert^2  dx+\frac{1}{2} \int_{\Omega}\int_{0}^{\infty}\mu'(s)\vert y_x(t)-y_{x}(t-s)\vert ^2ds dx  \label{E'}
	\end{eqnarray}
\end{lemma}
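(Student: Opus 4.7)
The approach is the standard multiplier method: multiply equation (\ref{1}) (with $f\equiv 0$) by $y'(t)$, integrate over $\Omega$, and reorganize each resulting term into the time derivative of the corresponding contribution to $E(t)$ from (\ref{Energy})--(\ref{J(t)}), plus a non-positive remainder.

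First I would handle the local-in-time terms. The inertial term contributes $\int_\Omega y''\,y'\,dx=\frac{1}{2}\frac{d}{dt}\int_\Omega|y'|^2\,dx$, while integration by parts in $x$ applied to $-\int_\Omega(y_x+y'_x+y''_x)_x y'\,dx$---legitimate because $y|_{\partial\Omega}=0$ forces $y'|_{\partial\Omega}=0$---gives $\int_\Omega(y_x+y'_x+y''_x)y'_x\,dx=\frac{1}{2}\frac{d}{dt}\int|y_x|^2+\int|y'_x|^2+\frac{1}{2}\frac{d}{dt}\int|y'_x|^2$. The middle piece $\int|y'_x|^2$, which carries no time derivative, is the Kelvin--Voigt dissipation that will eventually sit on the right-hand side of (\ref{E'}).

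For the memory term, after the change of variable $s\mapsto t-s$ rewrites $\int_{-\infty}^t\mu(t-s)y_{xx}(s)\,ds$ as $\int_0^\infty\mu(s)y_{xx}(t-s)\,ds$, I would apply Lemma \ref{lemg} with $v=y$. It produces three contributions: a $\frac{1}{2}\frac{d}{dt}$ of the relative-displacement integral $\int_\Omega\int_0^\infty\mu(s)|y_x(t)-y_x(t-s)|^2\,ds\,dx$ that already sits in $J(t)$; a term $-\frac{1}{2}\bigl(\int_0^\infty\mu(s)\,ds\bigr)\frac{d}{dt}\int|y_x|^2$ which combines with the $\frac{1}{2}\frac{d}{dt}\int|y_x|^2$ produced above to rebuild the $\bigl(1-\int_0^\infty\mu(s)\,ds\bigr)$-factor carried by $J(t)$; and the sign-indefinite residual $-\frac{1}{2}\int_\Omega\int_0^\infty\mu'(s)|y_x(t)-y_x(t-s)|^2\,ds\,dx$, which is exactly the second term on the right-hand side of (\ref{E'}). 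For the source $F[y]-(F[y])_x$, integration by parts on the divergence piece (boundary terms killed by (Hyp4)) converts $\int_\Omega(F[y]-(F[y])_x)y'\,dx$ into $\int_\Omega F[y](y'+y'_x)\,dx$; under the nonlinearity implicit in the present setup this equals $\frac{d}{dt}\bigl[\frac{1}{p}\int|y|^p-\frac{1}{p}\int|y_x|^p\bigr]$, accounting for the remaining pieces of $J(t)$.

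Collecting all the time-derivative contributions one recovers $\frac{d}{dt}E(t)$, and the equality version of (\ref{E'}) drops out. The inequality then follows at once: the dissipation piece is $-\int|y'_x|^2\leq 0$, while the memory residual is non-positive because $\mu'\leq 0$ by (Hyp1), so $E$ is non-increasing as claimed. The only delicate point is the correct use of Lemma \ref{lemg}: one must read the relative-displacement integrand in the $|y_x(t)-y_x(t-s)|^2$ form demanded by $J(t)$, and justify the vanishing of the $s=\infty$ boundary term arising in the underlying integration by parts in $s$, which is ensured by (\ref{Cond2g}) together with the $H_0^1\cap H^2$ regularity of the history inherited from (Hyp2).
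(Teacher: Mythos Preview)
Your proof is correct and follows exactly the paper's approach: multiply \eqref{1} by $y'$, integrate over $\Omega$, and invoke Lemma~\ref{lemg} for the memory term. The paper compresses all of this into a single sentence, whereas you have (correctly) spelled out how each term reconstitutes a piece of $E(t)$ and isolated the two dissipative remainders appearing on the right of \eqref{E'}.
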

\begin{proof} Multiplying (\ref{1}) with $p\geq2$ and $f\equiv 0$ by
	$y'(x,t)$ and integrating over $\Omega$, using Lemma \ref{lemg} we obtain th result.
\end{proof}
As in \cite{IVLove2018}, we will prove the invariance of the set $W.$ That is if for some $t_{0}>0$
if $y(t_0)\in W,$ then $y(t)\in W,$ $\forall t\geq t_{0}$, beginning by the existence of the potential depth in the next Lemma.
\begin{lemma} [\cite{IVLove2018}, Lemma 3.2]
	$d$ is positive constant.
\end{lemma}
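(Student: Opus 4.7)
The plan is to use the Nehari-manifold characterization $d = \inf_{y \in \mathcal{N}} J(t)$ displayed in the excerpt and show that $J$ admits a strictly positive uniform lower bound on $\mathcal{N}$. I would pick any $y \in \mathcal{N}$, interpreted as a stationary element of $H_0^1 \cap H^2$, so that the history contribution $\int_{\Omega}\int_0^\infty \mu(s)|y_x(t)-y_x(t-s)|^2\,ds\,dx$ drops out (this is the standard reading when $\mathcal{N}$ is regarded as a purely static subset). The constraint $I(y)=0$ then reduces to
\[
l\,\|y_x\|_{L^2}^2 + \|y_x\|_{L^p}^p = \|y\|_{L^p}^p,
\]
where $l = 1-\int_0^\infty \mu(s)\,ds > 0$ by (Hyp1).

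Next, I would invoke the Sobolev embedding $H_0^1(\Omega) \hookrightarrow L^p(\Omega)$, valid on the bounded one-dimensional interval $\Omega$ for every $p\ge 2$, to obtain a constant $C_s>0$ with $\|y\|_{L^p} \le C_s\,\|y_x\|_{L^2}$. Dropping the non-negative term $\|y_x\|_{L^p}^p$ on the left of the identity above and applying Sobolev on the right yields
\[
l\,\|y_x\|_{L^2}^2 \;\le\; \|y\|_{L^p}^p \;\le\; C_s^p\,\|y_x\|_{L^2}^p,
\]
which, for $p>2$, produces a uniform quantitative lower bound
\[
\|y_x\|_{L^2} \;\ge\; \rho \;:=\; \bigl(l/C_s^p\bigr)^{1/(p-2)} \;>\; 0
\]
for every $y \in \mathcal{N}$.

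To conclude, I would eliminate the $L^p$ difference appearing in $J$ by substituting the Nehari identity $\|y_x\|_{L^p}^p - \|y\|_{L^p}^p = -\,l\,\|y_x\|_{L^2}^2$ into the definition (\ref{J(t)}) of $J$; this produces the clean formula
\[
J(t) \;=\; \tfrac{1}{2}\,l\,\|y_x\|_{L^2}^2 - \tfrac{1}{p}\,l\,\|y_x\|_{L^2}^2 \;=\; \frac{p-2}{2p}\,l\,\|y_x\|_{L^2}^2 \;\ge\; \frac{p-2}{2p}\,l\,\rho^2.
\]
Passing to the infimum over $\mathcal{N}$ then gives $d \ge \frac{(p-2)l}{2p}\rho^2 > 0$, as required.

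The main obstacle is the borderline case $p=2$: in that case the inequality $l \le C_s^p\,\|y_x\|_{L^2}^{p-2}$ degenerates and carries no information about $\|y_x\|_{L^2}$, so positivity of $d$ would require the sharper subcritical hypothesis $l > C_s^2$ on the best Sobolev constant. For $p>2$ the argument closes cleanly with no extra assumption, and this is presumably the regime in which the statement is intended (the excerpt explicitly takes $p\ge 2$ and the positivity conclusion is meaningful in the strict inequality range).
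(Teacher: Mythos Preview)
The paper does not actually supply a proof of this lemma; it is merely quoted from \cite{IVLove2018}, so there is no in-paper argument to compare against. Your proposal is the standard Nehari-manifold computation that such a reference would contain: from $I=0$ and the Sobolev embedding one forces $\|y_x\|_{L^2}\ge\rho>0$, and then on $\mathcal N$ the functional rewrites as $J=\tfrac{p-2}{2p}\,l\,\|y_x\|_{L^2}^2$, giving a strictly positive infimum. The steps are correct for $p>2$, and your caveat about the degeneracy at $p=2$ is well taken---indeed, throughout the paper the factor $\tfrac{p-2}{2p}$ and the smallness condition (\ref{global condition}) are used, and both are only meaningful for $p>2$.
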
 
\begin{lemma} [\cite{IVLove2018}, Lemma 3.3]
	 $W$ is a bounded neighborhood of $0$ in $H_{0}^{1}\cap H^2$.
\end{lemma}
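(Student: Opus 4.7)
The plan is to establish the two defining properties in turn: that $W$ contains an open $H_0^1\cap H^2$-ball about the origin, and that every $y\in W$ obeys a uniform norm bound. The algebraic workhorse is the identity
\begin{equation*}
J(t)=\frac{1}{p}\,I(t)+\Bigl(\frac{1}{2}-\frac{1}{p}\Bigr)\!\left[l\,\|y_x\|_2^2+\int_{\Omega}\!\int_{0}^{\infty}\!\mu(s)|y_x(t)-y_x(t-s)|^2\,ds\,dx\right]\!,
\end{equation*}
which follows at once from \eqref{J(t)} and \eqref{I(t)} by comparing the $\frac12$ and $\frac1p$ coefficients on each term. Combined with the one-dimensional Sobolev embedding $H_0^1\hookrightarrow L^p$ on $(0,L)$, this identity handles both halves of the claim.

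For the neighborhood property, I take $y$ of small $H_0^1\cap H^2$-norm (with trivial past, so that the history contribution vanishes). The embedding supplies a constant $C_p$ with $\|y\|_p^p\leq C_p\|y_x\|_2^p$, hence
\begin{equation*}
I(y)\;\geq\;\|y_x\|_2^2\bigl(l-C_p\|y_x\|_2^{p-2}\bigr)+\|y_x\|_p^p,
\end{equation*}
which is strictly positive as soon as $C_p\|y_x\|_2^{p-2}<l$. Since $J$ is continuous at $0$ with $J(0)=0$ and $d>0$ by the preceding lemma, $J(y)<d$ on the same ball; so a small $H_0^1\cap H^2$-ball lies inside $W$.

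For boundedness, let $y\in W$. Because $I(t)>0$, the bracketed history-plus-gradient term is nonnegative, and $p\geq 2$ forces $\frac12-\frac1p\geq 0$, the identity paired with $J(t)<d$ gives
\begin{equation*}
d\;>\;\Bigl(\frac{1}{2}-\frac{1}{p}\Bigr) l\,\|y_x\|_2^2,
\end{equation*}
so $\|y_x\|_2^2<\frac{2pd}{(p-2)\,l}$ whenever $p>2$. This supplies a uniform $H_0^1$-bound on $W$. The main obstacle I foresee is the borderline case $p=2$: the decisive coefficient $\frac12-\frac1p$ then vanishes and boundedness must be extracted differently, for instance from the contribution $\|y_x\|_p^p=\|y_x\|_2^2$ in $I(t)$. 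A secondary delicate point is that neither $I(t)$ nor $J(t)$ directly controls the $H^2$-seminorm, so upgrading from a ball in $H_0^1$ to one in the full intersection norm relies on the ambient $H^2$-regularity guaranteed by Theorem~\ref{thm2.2} and on Sobolev interpolation to absorb the $\|y_x\|_p^p$ term via $\|y_x\|_\infty$.
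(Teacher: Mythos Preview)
The paper does not supply its own proof of this lemma; it is simply quoted from \cite{IVLove2018} without argument, so there is no in-text proof to compare your attempt against. Your approach---the identity
\[
J=\frac{1}{p}\,I+\Bigl(\frac{1}{2}-\frac{1}{p}\Bigr)\Bigl[\,l\,\|y_x\|_2^2+\int_{\Omega}\!\int_0^{\infty}\!\mu(s)|y_x(t)-y_x(t-s)|^2\,ds\,dx\Bigr]
\]
together with the Sobolev embedding $H_0^1\hookrightarrow L^p$---is the standard potential-well computation in this literature, and in fact the same arithmetic is already visible in the paper's proof of Theorem~\ref{global theorem}, so it is almost certainly what the cited reference does as well.

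The two caveats you raise are genuine imprecisions in the \emph{statement} rather than gaps in your argument. At $p=2$ the coefficient $\tfrac12-\tfrac1p$ vanishes and one instead uses $\|y_x\|_p^p=\|y_x\|_2^2$ inside $I$ to close the bound, exactly as you suggest. More seriously, neither $I$ nor $J$ carries any second-derivative information, so ``bounded in $H_0^1\cap H^2$'' cannot be extracted from $J<d$ alone; the intended reading is that $W$ is an $H_0^1$-bounded subset of $H_0^1\cap H^2$, with the $H^2$ membership merely a regularity qualifier on the elements of $W$. Your observation that the $H^2$-seminorm is not controlled is correct, and no amount of interpolation will manufacture an $H^2$ bound from $J$ and $I$; the paper (and its source) are simply loose on this point.
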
 
Now, we will show that our local solution $y$ is global in time, for this purpose it suffices to
prove that the norm of the solution is bounded, independently of $t$, this is equivalent to prove the
following theorem.
\begin{theorem} [\cite{IVLove2018}, Theorem 3.4]
	 Suppose that (Hyp1) and
	\begin{eqnarray}
	C^{p}l^{(1-p)}\left( \frac{2p}{p-2}E(0)\right)^{(p-2)}<l.\label{global condition}
	\end{eqnarray}
	hold, where $C$ is the best Poincare's constant. If $y_0(0)\in W,$ $y_1\in H^1_0$, then the solution $y\in W,$ $\forall t\geq 0$.
\end{theorem}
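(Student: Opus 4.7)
The plan is a classical invariance-of-stable-set argument by continuity and contradiction. Since $y_0(0)\in W$ we have $I(0)>0$ and $J(0)<d$; by continuity in $t$ of $I$ and $J$ (inherited from the regularity of the local solution given by Theorem \ref{thm2.2}), I set
\[
T_m:=\sup\{t\in[0,T_{\ast}):\ y(\tau)\in W \text{ for all } \tau\in[0,t]\}
\]
and assume for contradiction $T_m<T_{\ast}$. Continuity then forces at $t=T_m$ either $I(T_m)=0$ with $y(T_m)\neq 0$, or $J(T_m)=d$; both alternatives must be excluded.

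The first key step is the algebraic identity $J(t)=\frac{p-2}{2p}B(t)+\frac{1}{p}I(t)$ with
\[
B(t):=\Big(1-\int_0^\infty\mu(s)\,ds\Big)\|y_x(t)\|^2+\int_0^\infty\mu(s)\|y_x(t)-y_x(t-s)\|^2\,ds,
\]
obtained directly from (\ref{J(t)}) and (\ref{I(t)}). On $[0,T_m]$ we have $I(t)\ge 0$, while Lemma \ref{lemE'} gives $E(t)\le E(0)$ together with $E(t)\ge J(t)$; combining these yields $\tfrac{p-2}{2p}B(t)\le J(t)\le E(0)$, hence the a priori bound $l\,\|y_x(t)\|^2\le B(t)\le \tfrac{2p}{p-2}E(0)$ throughout $[0,T_m]$, using $1-\int_0^\infty\mu(s)\,ds=l$ from (Hyp1).

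The second key step uses the Sobolev/Poincar\'e embedding $\|y\|_p\le C\|y_x\|$, with $C$ the best constant appearing in (\ref{global condition}), to write
\[
\|y(t)\|_p^p\le C^p\,\|y_x(t)\|^{p-2}\,\|y_x(t)\|^2.
\]
Hypothesis (\ref{global condition}) is tailored precisely so that, after substituting the bound for $\|y_x(t)\|^{p-2}$ from the previous step, the prefactor $C^p\,\|y_x(t)\|^{p-2}$ stays strictly below $l$ on $[0,T_m]$. Therefore $\|y(t)\|_p^p<l\,\|y_x(t)\|^2\le B(t)$, which gives $I(t)=B(t)+\|y_x(t)\|_p^p-\|y(t)\|_p^p>0$ strictly at $t=T_m$, ruling out $I(T_m)=0$. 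Combined with the fact that (\ref{global condition}) also implies $E(0)<d$ (via the Nehari characterization of $d$), this rules out $J(T_m)=d$ too. The contradiction forces $T_m=T_{\ast}$, and the uniform bound $\|y_x\|^2\le\tfrac{2p}{(p-2)l}E(0)$ allows the local solution to be continued to all $t\ge 0$ by Theorem \ref{thm2.2}.

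The main obstacle I anticipate is the exponent bookkeeping in the Sobolev step: one has to check that the powers of $l$, $E(0)$ and $C$ appearing in (\ref{global condition}) match exactly what is produced by substituting $\|y_x\|^{p-2}\le\bigl(\tfrac{2p}{(p-2)l}E(0)\bigr)^{(p-2)/2}$ into $C^p\|y_x\|^{p-2}<l$, and that the strict inequality is preserved up to the boundary $t=T_m$. Apart from this algebraic verification, the proof is purely topological once the identity $J=\tfrac{p-2}{2p}B+\tfrac{1}{p}I$ and the non-increase of $E$ are in hand.
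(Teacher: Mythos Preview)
The paper does not actually prove this theorem: it is merely quoted from \cite{IVLove2018} (Theorem~3.4 there), so there is no proof in the present paper to compare against. Your continuity/contradiction argument via the identity $J=\tfrac{p-2}{2p}B+\tfrac{1}{p}I$, the energy non-increase from Lemma~\ref{lemE'}, and the Sobolev control of $\|y\|_p^p$ is the standard potential-well invariance argument and is correct in outline; in particular it is exactly the mechanism the paper itself invokes later in (\ref{p}) and in the proof of Theorem~\ref{global theorem}.

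Your flagged ``exponent bookkeeping'' concern is legitimate and worth recording. The Sobolev step yields
\[
\|y\|_p^p\le C^p\Big(\tfrac{2p}{(p-2)l}E(0)\Big)^{(p-2)/2}\|y_x\|^2,
\]
so the natural smallness condition is $C^p\,l^{-(p-2)/2}\big(\tfrac{2p}{p-2}E(0)\big)^{(p-2)/2}<l$, with exponent $(p-2)/2$. Note that the paper's own display (\ref{p}) uses precisely this exponent, whereas the stated condition (\ref{global condition}) carries the exponent $(p-2)$ and a power $l^{1-p}$; the two forms are not literally equivalent, and the discrepancy appears to be a typographical inconsistency in the paper rather than a flaw in your reasoning. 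Your derivation of $E(0)<d$ from the Nehari lower bound is consistent with the $(p-2)/2$ version, so the contradiction ruling out $J(T_m)=d$ goes through with the correctly derived condition.
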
 
The next Theorem shows that our local solution is global in time.
\begin{theorem}\label{global theorem} Suppose that (Hyp1), $p\geq
	2$ and (\ref{global condition}) hold. If $y_{0}(0)\in W,$ $y_{1}\in H^1_0$. Then the local solution $y$ is global in time such that $y\in G_{T}$ where
	\begin{equation}
	G_{T}=\left\{
	\begin{array}{ll}
	y: & y\in L^{\infty}\left(\mathbb{R}^{+} ;H^1_0\cap H^2\right) , \\
	& y '\in L^{\infty}\left(\mathbb{R}^{+} ;H^1_0\right)
	\end{array}
	\right\} .
	\end{equation}
\end{theorem}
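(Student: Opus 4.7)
The plan is to combine the local existence theorem (Theorem \ref{thm2.2}) with the $W$-invariance asserted just above: under (\ref{global condition}), the solution stays in the stable set $W$ as long as it exists, so $I(t)>0$ and $J(t)<d$. I will turn this into uniform-in-time a priori bounds on the natural norms of $(y,y')$ and then close the argument by a standard continuation.

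The key algebraic identity I would exploit is the decomposition
\begin{equation*}
J(t)=\frac{1}{p}I(t)+\Bigl(\frac{1}{2}-\frac{1}{p}\Bigr)\Bigl[l\int_{\Omega}\vert y_{x}(t)\vert^2 dx+\int_{\Omega}\int_0^\infty\mu(s)\vert y_x(t)-y_x(t-s)\vert^2 ds\,dx\Bigr],
\end{equation*}
which follows at once from (\ref{J(t)})--(\ref{I(t)}) (the $\int_\Omega \vert y_x\vert^p$ and $\int_\Omega \vert y\vert^p$ contributions cancel). Since $p\geq 2$ and $I(t)>0$, combining this identity with the energy decay of Lemma \ref{lemE'} (recall $\mu'\leq 0$) and the definition (\ref{Energy}) of $E(t)$ yields
\begin{equation*}
\Bigl(\frac{1}{2}-\frac{1}{p}\Bigr)\,l\int_{\Omega}\vert y_x(t)\vert^2 dx \;\leq\; J(t) \;\leq\; E(t) \;\leq\; E(0),
\end{equation*}
together with $\int_{\Omega}\vert y'\vert^2 dx+\int_{\Omega}\vert y'_x\vert^2 dx\leq 2E(0)$ and an analogous bound on the memory term. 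All of these constants depend only on $E(0)$, $p$, and $l$, and not on the horizon on which the local solution lives.

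With these a priori estimates in hand, the continuation step is routine: if the maximal interval of existence $[0,T_{\max})$ of the local solution produced by Theorem \ref{thm2.2} were finite, one would reapply that theorem at an initial time $t_0$ close enough to $T_{\max}$, taking $\bigl(y(\cdot,t_0),y'(\cdot,t_0)\bigr)$ as new initial data. These data lie in $H_0^1\cap H^2$ with norms controlled by the uniform estimates above, so the local theorem would furnish an extension past $T_{\max}$, contradicting maximality. Hence $T_{\max}=+\infty$ and $(y,y')$ enjoys exactly the regularity prescribed by the space $G_T$.

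The main obstacle, and the point requiring the most care, is propagating the $H^2$ piece of the $G_T$ regularity: the energy $E(t)$ only provides $H_0^1$ control on $y$ and $y'$, whereas Theorem \ref{thm2.2} needs data in $H_0^1\cap H^2$ in order to restart. To bridge this gap I would introduce an auxiliary higher-order energy obtained by differentiating (\ref{1}) in $x$ (equivalently, by testing the equation against $-y''_{xx}$), use (Hyp4) together with the $H_0^1$ bounds already established to dominate the nonlinear contributions $F[y]$ and $(F[y])_x$, and close the inequality via Gronwall on each bounded subinterval. The resulting $H^2$-bound depends only on $E(0)$ and the initial data, so the continuation argument lifts from $H_0^1$ to $H_0^1\cap H^2$ and the proof closes.
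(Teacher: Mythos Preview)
Your core argument---the decomposition
\[
J(t)=\frac{1}{p}I(t)+\Bigl(\frac{1}{2}-\frac{1}{p}\Bigr)\Bigl[l\int_{\Omega}|y_x|^2\,dx+\int_{\Omega}\int_0^\infty\mu(s)|y_x(t)-y_x(t-s)|^2\,ds\,dx\Bigr],
\]
combined with $I(t)>0$ (from $W$-invariance) and the energy decay $E(t)\le E(0)$ to bound $\int_\Omega|y_x|^2\,dx$ and $\int_\Omega|y'|^2\,dx$ uniformly in $t$---is exactly the proof given in the paper, line for line. The paper simply declares that this $H_0^1$ bound ``suffices'' and stops there; your additional discussion of the continuation step and the auxiliary $H^2$ energy goes beyond what the paper actually provides, addressing a point the paper leaves implicit.
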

\begin{proof}
Now, it suffices to show that the following norm
	\begin{equation}
	\int_{\Omega} \vert y'\vert^2   dx+\int_{\Omega}\vert y_{x}\vert^2   dx,
	\end{equation}%
	is bounded independently of $t$.
	
	To achieve this, we use (\ref{Energy}), (\ref{J(t)}) and (\ref{E'}) to get
	\begin{eqnarray}
	E(0) &\geq & E(t)=J(t)+\frac{1}{2}\int_{\Omega}\vert y'\vert^2   dx
	\nonumber \\
	&\geq& \Big(\frac{p-2}{2p}\Big)\Big[\int_{\Omega} \Big(1-  \int_{0}^{\infty}\mu(s)ds\Big)\vert y_{x}\vert^2   dx+ \int_{\Omega}\int_{0}^{\infty}\mu(s)\vert y_x(t)-y_{x}(t-s)\vert ^2ds dx\Big] \nonumber \\
	&+&\frac{1}{2}\int_{\Omega}\vert y'\vert^2   dx+\frac{1}{p}I(t)  \nonumber\\
	&\geq&  \Big(\frac{p-2}{2p}\Big)\Big[l\int_{\Omega} \vert y_{x}\vert^2   dx+ \int_{\Omega}\int_{0}^{\infty}\mu(s)\vert y_x(t)-y_{x}(t-s)\vert ^2ds dx\Big]+\frac{1}{2}\int_{\Omega}\vert y'\vert^2   dx+\frac{1}{p}I(t)  \nonumber\\
	&\geq&  \Big(\frac{l(p-2)}{2p}\Big) \int_{\Omega} \vert y_{x}\vert^2   dx  +\frac{1}{2}\int_{\Omega}\vert y'\vert^2   dx     \nonumber
	\end{eqnarray}%
	since $I(t)$ and $ \int_{\Omega}\int_{0}^{\infty}\mu(s)\vert y_x(t)-y_{x}(t-s)\vert ^2ds dx$ are positive, hence
	\begin{eqnarray}
	\Big(\frac{l(p-2)}{2p}\Big) \int_{\Omega} \vert y_{x}\vert^2   dx  +\frac{1}{2}\int_{\Omega}\vert y'\vert^2   dx\leq CE(0), \nonumber
	\end{eqnarray}%
	where $C$ is a positive constant depending only on $p$ and $l$. This completes the proof.
\end{proof}
\section{General decay rate}
\begin{theorem}\label{decay theorem} Suppose that (Hyp1), $p\geq2$ and (\ref{global condition}) hold. If $y_{0}(0)\in H^1_0\cap H^2,$ $y_{1}\in H^1_0$. Then the energy function (\ref{Energy}) satisfies
	\begin{equation}
	E(t)\leq \kappa_1H_1^{-1}(\kappa t+\kappa_0), \qquad \forall t \geq 0,
	\end{equation}
	where
	\begin{eqnarray}
	 H_1(\tau)=\int_{\tau}^{1} (s H'(\kappa s))^{-1}d s, \qquad \kappa_0, \kappa_1 , \kappa>0.
	\end{eqnarray}
\end{theorem}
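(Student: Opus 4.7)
The plan is to construct a Lyapunov functional $L(t)$ equivalent to $E(t)$, derive a differential inequality whose right-hand side involves the convex function $H$, and then invert to recover the decay rate through the definition of $H_1$.

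First, I would introduce the auxiliary functionals suited to the Love-type structure of \eqref{1},
$$\Psi(t) = \int_{\Omega} \bigl(y y' + y_x y'_x\bigr)\,dx, \qquad \chi(t) = -\int_{\Omega} (y' + y'_x)\int_0^{\infty}\mu(s)\bigl(y(t)-y(t-s)\bigr)\,ds\,dx,$$
and define $L(t) = N E(t) + \varepsilon_1 \Psi(t) + \varepsilon_2 \chi(t)$. Using Cauchy--Schwarz, Young, and Poincar\'e inequalities together with the global bound $\int_{\Omega}|y_x|^2 dx \leq C E(0)$ coming from Theorem \ref{global theorem}, I would verify the equivalence $\alpha_1 E(t) \leq L(t) \leq \alpha_2 E(t)$ provided $N$ is chosen sufficiently large relative to $\varepsilon_1,\varepsilon_2$.

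Second, differentiating $\Psi$ and $\chi$ along the variational identity \eqref{b3} (the $\Psi$ derivative produces $\|y'\|^2 + \|y'_x\|^2 - I(t)$-type terms modulo $F[y]$ contributions that are handled via (Hyp4) and the invariance of $W$), and combining with Lemma \ref{lemE'} for $E'(t)$, I expect to obtain, after absorbing the $\|y'_x\|^2$ terms into the dissipative contribution $-\int_{\Omega}|y'_x|^2 dx$ from $E'(t)$,
$$L'(t) \leq -m\, E(t) + c\int_{\Omega}\int_0^{\infty}\mu(s)\,|y_x(t)-y_x(t-s)|^2\,ds\,dx,$$
for suitable $m,c>0$, where the nonlinear $\|y\|_p^p$ terms are controlled by $I(t)>0$ via the invariance of $W$.

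Third, and this is the heart of the argument, I would treat the memory remainder by the Messaoudi--Mustafa technique. Writing
$$\mu(s)\,\|y_x(t)-y_x(t-s)\|^2 = \frac{\mu(s)}{H^{-1}(-\mu'(s))}\,H^{-1}(-\mu'(s))\,\|y_x(t)-y_x(t-s)\|^2,$$
I would apply Jensen's inequality with respect to the probability measure $\mu(s)/H^{-1}(-\mu'(s))\,ds$, whose finiteness is precisely guaranteed by hypothesis \eqref{H}, and use the convex-conjugate inequality $H^{*}(y)+H(x) \geq xy$ with $x = H^{-1}(-\mu'(s))$. Boundedness of $\|y_x\|$ from Theorem \ref{global theorem} ensures that the argument lies in a range where $H$ and $H^{*}$ are well-behaved. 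This yields a bound of the form
$$\int_{\Omega}\int_0^{\infty}\mu(s)|y_x(t)-y_x(t-s)|^2 ds\,dx \leq \frac{c\,E(t)}{H'(\kappa E(t))} + \frac{c}{H'(\kappa E(t))}\bigl(-E'(t)\bigr).$$

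Finally, feeding this back into the bound on $L'(t)$, multiplying $L$ by $H'(\kappa E(t))$, and defining a perturbed Lyapunov functional $\mathcal L(t) \sim L(t) H'(\kappa E(t))$, I would reach a differential inequality of the form $\mathcal L'(t) \leq -\kappa\, E(t)\,H'(\kappa E(t))$ with $\mathcal L(t) \geq 0$ and $\mathcal L \sim E\,H'(\kappa E)$. Dividing and integrating from $0$ to $t$ reproduces exactly the integral defining $H_1$, and inverting $H_1$ delivers the claimed estimate $E(t) \leq \kappa_1 H_1^{-1}(\kappa t + \kappa_0)$. The principal obstacle is the Jensen step: one must tune $\kappa$ so that $\kappa E(t)$ stays within the regime where the monotonicity of $H'$ and the conjugate inequality can be used uniformly in $t$, and one must carefully track the constants introduced by the nonlinear terms $F[y]$ via (Hyp4) so that they do not destroy the sign of the coefficient of $E(t)$ in the final differential inequality.
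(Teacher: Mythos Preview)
Your overall architecture coincides with the paper's: build a Lyapunov functional $L=\varepsilon_1 E+\varphi+\varepsilon_2\xi$ equivalent to $E$, derive $L'\le -mE+c\int_\Omega\int_0^\infty\mu|y_x(t)-y_x(t-s)|^2$, control the memory remainder through the convex conjugate of $H$, multiply by $H'(\gamma_0 E)$, and integrate via $H_1$. The auxiliary functionals you wrote differ from the paper's $\varphi,\xi$ only in inessential details (the paper's $\varphi$ carries an extra $\tfrac12\|y_x\|^2$ from the $y'_{xx}$ term, and $\xi$ involves time-integrated pieces).

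The genuine gap is in your third step. The memory bound you display,
\[
\int_\Omega\int_0^\infty \mu(s)|y_x(t)-y_x(t-s)|^2\,ds\,dx \;\le\; \frac{c\,E(t)}{H'(\kappa E(t))} + \frac{c\,(-E'(t))}{H'(\kappa E(t))},
\]
would destroy the closing argument: after multiplying $L'$ by $H'(\kappa E(t))$ the first term becomes a bare $+cE(t)$, which cannot be absorbed into $-mE(t)H'(\kappa E(t))$ because $H'(0)=0$ forces $H'(\kappa E(t))\to 0$ as the energy decays. The estimate one actually needs (the paper's Lemma~\ref{L5}) is
\[
\int_\Omega\int_0^\infty \mu(s)|y_x(t)-y_x(t-s)|^2\,ds\,dx \;\le\; \frac{-\gamma E'(t)}{H'(\gamma_0 E(t))} + \gamma\gamma_0 E(t),
\]
where the $E(t)$ term carries the small parameter $\gamma_0$ and \emph{no} $1/H'$ factor. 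This form does not come from Jensen against the measure $\mu/H^{-1}(-\mu')\,ds$; it requires Young's inequality $AB\le H(A)+H^*(B)$ with the specific choice $A=H^{-1}\bigl(-r_2\mu'(s)\|y_x(t)-y_x(t-s)\|_2^2\bigr)$ and $B$ the complementary quotient weighted by $r_1H'(\gamma_0 E)$, then $H^*(B)\le B(H')^{-1}(B)$, the monotonicity of $s\mapsto s/H^{-1}(s)$, and hypothesis~\eqref{H}. This chain is exactly what yields the factor $(H')^{-1}\bigl(H'(\gamma_0 E)\bigr)=\gamma_0 E$ without a denominator. Correspondingly, the final perturbed functional must be $g(t)=H'(\gamma_0 E(t))L(t)+c\gamma E(t)$, which is equivalent to $E$ itself (not to $E\,H'(\kappa E)$ as you wrote); only with $g\sim E$ does $g'\le -\kappa\, g\, H'(\gamma_0 g)$ integrate to $H_1(g(t))\ge \kappa t+\kappa_0$ and yield the stated rate.
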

In order to prove the main Theorem \ref{decay theorem}, we need to introduce a several Lemmas. To this end, let us introduce the functionals
\begin{eqnarray}
\varphi(t)= \int_{\Omega} y y'dx+\frac{1}{2}\int_{\Omega} \vert y_x\vert ^2dx+ \int_{\Omega} y_x  y'_{x}dx \label{varphi}
\end{eqnarray}
and 
\begin{eqnarray}
\xi(t)&=& -\int_{\Omega} y'\int_{0}^{\infty}\mu(s) [y(t)-y(t-s)]dsdx \nonumber\\
&-& \int_{0}^{t}\int_{\Omega} y'_x\int_{0}^{\infty}\mu(s) [y_x(t)-y_x(t-s)]dsdxd\tau \nonumber\\
&-& \int_{0}^{t}\int_{\Omega}  y''_x\int_{0}^{\infty}\mu(s) [y_x(t)-y_x(t-s)]dsdx d\tau, \label{xi}
\end{eqnarray}
\begin{lemma}\label{L1}
	 Assume that (Hyp1), $p\geq2$ and (\ref{global condition}) hold. Then the functional $\varphi(t)$ introduced in (\ref{varphi}) satisfies, along the solution, the estimate
	 \begin{eqnarray}
	 &&	\varphi' (t)\leq\int_{\Omega} y'^2dx+ \int_{\Omega}y'^2_xdx \nonumber\\
	 &&-\Big[\frac{l}{2}  -\Big(\frac{2p}{(p-2)l}E(0)\Big)^{(p-2)/2}\Big] \int_{\Omega} \vert y_x\vert ^2dx +\frac{ (1-l)}{2l}\int_{\Omega} \int_{0}^{\infty}\mu(s)\vert y_{x}(t-s)- y_x \vert^2 ds dx \nonumber
	 \end{eqnarray} 
\end{lemma}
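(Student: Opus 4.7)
The plan is to differentiate $\varphi(t)$ termwise, use the variational identity \eqref{b3} with the test function $w=y$ (and $f\equiv 0$) to eliminate the second-time-derivative terms, then split the memory convolution around $y_x(t)$ and absorb the nonlinear source via the global a priori bound from Theorem \ref{global theorem}.

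From \eqref{varphi} a direct differentiation produces
\[
\varphi'(t)=\int_\Omega (y')^2\,dx+\int_\Omega (y_x')^2\,dx+\int_\Omega yy''\,dx+\int_\Omega y_x y_x'\,dx+\int_\Omega y_x y_x''\,dx.
\]
Substituting the weak equation \eqref{b3} tested against $w=y$ to rewrite the combination $\int yy''\,dx+\int y_x y_x''\,dx$, the two $\int y_x y_x'\,dx$ contributions cancel against each other, leaving
\[
\varphi'(t)=\int (y')^2\,dx+\int (y_x')^2\,dx-\int y_x^2\,dx+\int y_x\!\!\int_0^\infty\!\!\mu(s)y_x(t-s)\,ds\,dx+\int F[y](y+y_x)\,dx.
\]

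For the memory term I write $y_x(t-s)=[y_x(t-s)-y_x]+y_x$; invoking $\int_0^\infty\mu(s)\,ds=1-l$ from \eqref{Cond2g} and applying Cauchy--Schwarz on the $s$-integral followed by Young's inequality with weight $l$, the cross piece is dominated by $\tfrac{l}{2}\int y_x^2\,dx+\tfrac{1-l}{2l}\int\!\!\int_0^\infty\mu(s)|y_x(t-s)-y_x|^2\,ds\,dx$. Combined with the $-\int y_x^2+(1-l)\int y_x^2=-l\int y_x^2$ already at hand, this yields exactly the coefficient $-l/2$ on $\int y_x^2$ and the announced memory remainder.

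The genuine obstacle is the nonlinear source $\int_\Omega F[y](y+y_x)\,dx$. Since the potential of $F[y]$ furnishes the $\frac{1}{p}\|y_x\|_p^p-\frac{1}{p}\|y\|_p^p$ part of $J(t)$, this contribution reduces essentially to $\int|y|^p\,dx$ (the $|y_x|^p$ portion having a favorable sign); the one-dimensional Sobolev embedding then gives $\|y\|_p^p\le C^p\|y_x\|_2^{p-2}\,\|y_x\|_2^2$. Plugging in the invariance bound $\|y_x\|_2^2\le\tfrac{2p}{l(p-2)}E(0)$ established during the proof of Theorem \ref{global theorem} on the $\|y_x\|_2^{p-2}$ factor, and using \eqref{global condition} to absorb the Poincaré constant, I obtain
\[
\Big|\int_\Omega F[y](y+y_x)\,dx\Big|\le\Big(\tfrac{2p}{(p-2)l}E(0)\Big)^{(p-2)/2}\int_\Omega y_x^2\,dx.
\]
Assembling the four bounds yields the claim. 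The delicate point is matching the precise quantitative factor $(\tfrac{2p}{(p-2)l}E(0))^{(p-2)/2}$ appearing in the statement, which rests on invariance of $W$ along the flow together with the best-constant interpretation of \eqref{global condition}.
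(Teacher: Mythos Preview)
Your argument is correct and follows essentially the same route as the paper: differentiate $\varphi$, test the equation against $w=y$ to obtain $\varphi'=\int y'^2+\int y_x'^2-\int y_x^2+\int|y|^p-\int|y_x|^p+\int y_x\int_0^\infty\mu(s)y_x(t-s)\,ds\,dx$, split the memory integrand around $y_x(t)$ and close with Young's inequality, then dominate $\int|y|^p$ via the embedding $H_0^1\hookrightarrow L^p$ together with the invariant bound $\|y_x\|_2^2\le\frac{2p}{l(p-2)}E(0)$. The only cosmetic difference is that the paper first applies Young's inequality to the full convolution, expands the resulting square with a second parameter $\nu$, and then chooses $\nu=l/(1-l)$, whereas you add and subtract $y_x$ first and use one weighted Young step; both yield the same constants $-l/2$ and $(1-l)/(2l)$.
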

\begin{proof}
	\begin{eqnarray}
&&	\varphi' (t)=\int_{\Omega} y'^2dx+ \int_{\Omega} y'^2_xdx- \int_{\Omega}y_x^2dx+\int_{\Omega}\vert y \vert^{p}dx -\int_{\Omega}\vert y_x \vert^{p}dx\nonumber\\
	&&+ \int_{\Omega} y_x\int_{0}^{\infty}\mu(s)y_{x}(t-s)dsdx \nonumber
	\end{eqnarray}
	The next term can be treated as follows
	\begin{eqnarray}
	 \int_{\Omega} y_x\int_{0}^{\infty}\mu(s)y_{x}(t-s)dsdx\leq \frac{1}{2}\int_{\Omega} y_x^2dx+\frac{1}{2} \int_{\Omega}\Big( \int_{0}^{\infty}\mu(s)y_{x}(t-s)ds\Big)^2dx\nonumber\\
	 \qquad\qquad \leq
	 \frac{1}{2}\int_{\Omega}y_x^2dx+\frac{1}{2} \int_{\Omega}\Big( \int_{0}^{\infty}\mu(s)\vert y_{x}(t-s)-  y_x \vert  + \vert y_x \vert ds\Big)^2dx\nonumber
	\end{eqnarray}
	Thanks to Cauchy-Schwarz inequality, Young's inequality to obtain, for some $\nu >0$,
	\begin{eqnarray}
 && \int_{\Omega}\Big( \int_{0}^{\infty}\mu(s)\vert y_{x}(t-s)-  y_x \vert  + \vert y_x \vert ds\Big)^2dx\nonumber\\
  && \leq  \int_{\Omega}\Big( \int_{0}^{\infty}\mu(s)\vert y_{x}(t-s)-  y_x \vert ds \Big)^2dx +\int_{\Omega}\Big( \int_{0}^{\infty}\mu(s)  \vert y_x \vert ds\Big)^2dx\nonumber\\
  && +2\int_{\Omega}\Big( \int_{0}^{\infty}\mu(s)\vert y_{x}(t-s)-  y_x \vert ds \Big)  \Big( \int_{0}^{\infty}\mu(s)  \vert y_x \vert ds\Big) dx\nonumber\\
  &&\leq \Big(1+\frac{1}{\nu}\Big)\int_{\Omega}\Big( \int_{0}^{\infty}\mu(s)\vert y_{x}(t-s)-  y_x \vert ds\Big)^2dx \nonumber\\
  &+& (1+\nu) \int_{\Omega}\Big( \int_{0}^{\infty}\mu(s)ds  \vert y_x \vert \Big)^2dx\nonumber\\
  &\leq & \Big(1 +\frac{1}{\nu}\Big)(1-l)\int_{\Omega}  \int_{0}^{\infty}\mu(s)\vert y_{x}(t-s)-  y_x \vert^2 ds dx+(1+\nu) (1-l)^2\int_{\Omega}\vert y_x \vert ^2dx. \nonumber
	\end{eqnarray}
	Then
	\begin{eqnarray}
	&&	\varphi' (t)\leq\int_{\Omega}y'^2dx+ \int_{\Omega}y'^2_xdx \nonumber\\
	&&+\Big[\frac{1}{2}+\frac{1}{2}(1+\nu) (1-l)^2-1\Big]\int_{\Omega} \vert y_x\vert ^2dx\nonumber\\
	&&+\frac{1}{2}\Big(1 +\frac{1}{\nu}\Big)(1-l)\int_{\Omega}  \int_{0}^{\infty}\mu(s)\vert y_{x}(t-s)-  y_x \vert^2 ds dx \nonumber\\
	&&+\int_{\Omega}\vert y \vert^{p}dx.  
	\end{eqnarray}
	By the continuous embedding for $p\geq 2$, we have
	\begin{eqnarray}
	&&\int_{\Omega}\vert y \vert^{p}dx \leq c^p\Big(\int_{\Omega}\vert y_x \vert^{2}dx\Big)^{p/2}\nonumber\\
	&& \leq c^p\Big(\int_{\Omega}\vert y_x \vert^{2}dx\Big)^{(p-2)/2} \int_{\Omega}\vert y_x \vert^{2}dx  \nonumber\\
	&& \leq c^p \Big(\frac{2p}{(p-2)l}E(0)\Big)^{(p-2)/2}\int_{\Omega}\vert y_x \vert^{2}dx \label{p}
	\end{eqnarray}
	Since (\ref{global condition}), we can choose $\nu = \frac{l}{1-l}$ to get 
	\begin{eqnarray}
	&&	\varphi' (t)\leq\int_{\Omega}y^2dx+ \int_{\Omega} y'^2_xdx \nonumber\\
	&&-\Big[\frac{l}{2}  -c^pl^{1-p}\Big(\frac{2p}{(p-2)}E(0)\Big)^{(p-2)}\Big]\int_{\Omega} \vert y_x\vert ^2dx\nonumber\\
	&&+\frac{ (1-l)}{2l}\int_{\Omega} \int_{0}^{\infty}\mu(s)\vert y_{x}(t-s)-  y_x \vert^2 ds dx \nonumber
	\end{eqnarray}
\end{proof}
\begin{lemma}\label{L2}
Assume that (Hyp1), $p\geq2$ hold. Then the functional  introduced in (\ref{xi}) satisfies, along the solution, the estimate
Then, for $\nu< (1-l)$
\begin{eqnarray}
\xi'(t)&\leq & -a   \int_{\Omega}\vert y_x\vert ^2 dx-((1-l)-\nu) \int_{\Omega}y'^2dx\nonumber\\
&+&b\int_{\Omega} \int_{0}^{\infty}\mu(s) \vert y_x(t)-y_x(t-s)\vert^2dsdx \nonumber\\  
&+& \frac{c\mu(0)}{4\nu}\int_{\Omega} \int_{0}^{\infty}(-\mu'(s)) \vert y(t)-y(t-s)\vert
^2dsdx.\nonumber
\end{eqnarray} 
where  
$$
a=c(\nu)\Big(1+2(1-l)^2- \Big(\frac{2p}{(p-2)l}E(0)\Big)^{(p-2)/2} \Big)>0
$$
and for all $\nu>0$ 
$$
b=\frac{1-l}{4\nu}+(2\nu+\frac{1}{4\nu})(1-l)+2\nu (1-l)^{p-1}c \Big( \frac{8}{1-l}E(0)+2m_0^2\Big)^{(p-2/2)}>0.
$$  
\end{lemma}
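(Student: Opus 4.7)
The plan is to differentiate $\xi(t)$ in $t$ and then use equation \eqref{1} (with $f\equiv 0$) to eliminate the resulting second-order time derivatives of $y$. Differentiating the first line of \eqref{xi} yields
\begin{equation*}
-\int_{\Omega}y''\int_{0}^{\infty}\mu(s)[y(t)-y(t-s)]\,ds\,dx\;-\;\int_{\Omega}y'\int_{0}^{\infty}\mu(s)[y'(t)-y'(t-s)]\,ds\,dx,
\end{equation*}
while the second and third lines of \eqref{xi}, which carry an outer $\int_{0}^{t}\cdots d\tau$, simply contribute their integrand evaluated at $\tau=t$. The key identity for producing the $y'$ damping is $y'(t-s)=-\partial_{s}[y(t-s)-y(t)]$; inserting this and integrating by parts in $s$ (the boundary terms vanish since $\mu(\infty)=0$ by (Hyp1)) gives
\begin{equation*}
\int_{0}^{\infty}\mu(s)[y'(t)-y'(t-s)]\,ds=(1-l)y'(t)+\int_{0}^{\infty}(-\mu'(s))[y(t-s)-y(t)]\,ds,
\end{equation*}
and Young's inequality with parameter $\nu>0$, together with the Cauchy--Schwarz bound $\bigl(\int(-\mu')g\,ds\bigr)^{2}\le\mu(0)\int(-\mu')g^{2}\,ds$ (using $\int_{0}^{\infty}(-\mu'(s))\,ds=\mu(0)$), yields the coercive term $-((1-l)-\nu)\int_{\Omega}y'^{2}\,dx$ together with the $\mu(0)/(4\nu)$ multiple of the $(-\mu')$ memory integral, the Poincar\'e constant $c$ being absorbed at the end.

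Next I would substitute equation \eqref{1} for $y''$ and integrate by parts in $x$ in the third line of $\xi'(t)$ (using the Dirichlet condition \eqref{2}) to convert the $y''_{x}$ contribution into a $y''$ tested against $\int\mu[y_{xx}(t)-y_{xx}(t-s)]\,ds$, which can again be eliminated via \eqref{1}. Every piece of \eqref{1}---namely $(y_{x}+y'_{x}+y''_{x})_{x}$, the memory convolution, and the semilinear source $F[y]-(F[y])_{x}$---then gives a bilinear form tested against $\int_{0}^{\infty}\mu(s)[y_{x}(t)-y_{x}(t-s)]\,ds$. Each cross term is bounded by Young with the same free parameter $\nu$ together with the Cauchy--Schwarz estimate
\begin{equation*}
\Big(\int_{0}^{\infty}\mu(s)g(s)\,ds\Big)^{2}\le(1-l)\int_{0}^{\infty}\mu(s)g(s)^{2}\,ds,
\end{equation*}
and the resulting contributions fall either into the coercive $\int_{\Omega}|y_{x}|^{2}\,dx$ term (with a negative sign, building the constant $a$) or into the memory norm with coefficient $b$.

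The semilinear contributions from $F[y]$ and from the $\int|y|^{p}$, $\int|y_{x}|^{p}$ terms are handled exactly as in the proof of Lemma \ref{L1}: the embedding $H_{0}^{1}\hookrightarrow L^{p}$ combined with the a priori bound $\int_{\Omega}|y_{x}|^{2}\,dx\le\frac{2p}{(p-2)l}E(0)$ from Theorem \ref{global theorem} gives
\begin{equation*}
\int_{\Omega}|y|^{p}\,dx\le c^{p}\Big(\frac{2p}{(p-2)l}E(0)\Big)^{(p-2)/2}\int_{\Omega}|y_{x}|^{2}\,dx,
\end{equation*}
and an analogous estimate on the $|y_{x}|^{p}$ piece, using in addition the energy bound $\int|y'_{x}|^{2}\,dx\le\tfrac{8}{1-l}E(0)+2m_{0}^{2}$ (the $m_{0}^{2}$ term reflecting the past history of the initial data), produces the $(1-l)^{p-1}\bigl(\tfrac{8}{1-l}E(0)+2m_{0}^{2}\bigr)^{(p-2)/2}$ factor appearing in $b$.

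The main obstacle is the careful bookkeeping: one must check that every Young inequality is applied with the \emph{same} $\nu$ so that the scattered remainders collect into the single constants $a$ and $b$; that the constraint $\nu<(1-l)$ is imposed precisely to keep the damping coefficient $(1-l)-\nu$ positive; and that the positivity of $a$ is guaranteed by the global existence condition \eqref{global condition}. The most delicate algebraic step is arranging the estimate so that only $c\mu(0)/(4\nu)$ (and not a larger constant) multiplies the $(-\mu')$ term; this hinges on performing the integration by parts in $s$ once and applying Cauchy--Schwarz a single time against the measure $-\mu'(s)\,ds$.
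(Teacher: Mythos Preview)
Your high-level strategy (differentiate $\xi$, use the identity for $\partial_t\int_0^\infty\mu(s)[y(t)-y(t-s)]\,ds$ to produce the $-(1-l)\int y'^2$ term, then estimate each remaining cross term with Young and Cauchy--Schwarz) matches the paper. But your treatment of the second and third lines of $\xi$ contains a genuine gap.

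You propose to integrate by parts in $x$ in the third line so that $-\int_\Omega y''_x\int_0^\infty\mu(s)[y_x(t)-y_x(t-s)]\,ds\,dx$ becomes $\int_\Omega y''\int_0^\infty\mu(s)[y_{xx}(t)-y_{xx}(t-s)]\,ds\,dx$, and then to substitute equation \eqref{1} \emph{a second time}. That second substitution does not close: the $(y_x+y'_x+y''_x)_x$ piece would then be tested against $\int\mu[y_{xx}(t)-y_{xx}(t-s)]$, and a further integration by parts produces third spatial derivatives that you have no control over. The paper's mechanism is different and simpler: after substituting \eqref{1} for $y''$ in the \emph{first} line of $\xi'$ and integrating the $(y_x+y'_x+y''_x)_x$ term by parts once in $x$, one obtains $+\int_\Omega y'_x\int\mu[y_x(t)-y_x(t-s)]$ and $+\int_\Omega y''_x\int\mu[y_x(t)-y_x(t-s)]$, which cancel \emph{exactly} against the contributions of the second and third lines of $\xi'$. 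That cancellation---not a second use of the equation---is the reason those two time-integrated lines were placed in $\xi$ in the first place, and it is what yields the clean list of six terms the paper then estimates.

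A second, smaller point: the factor $\bigl(\tfrac{8}{1-l}E(0)+2m_0^2\bigr)^{(p-2)/2}$ in $b$ does not come from a bound on $\int_\Omega|y'_x|^2\,dx$ as you write. It arises from the uniform bound on the history difference
\[
\int_\Omega|y_x(t)-y_x(t-s)|^2\,dx\le \frac{8}{1-l}E(0)+2m_0^2,
\]
(estimate \eqref{N1} in the paper, where $m_0$ controls $\sup_{\tau>0}\|y_{0x}(\cdot,\tau)\|_{L^2}$). This bound is used to reduce the $p$-th power of the memory difference, obtained after the Sobolev embedding, to a quadratic one; it has nothing to do with $y'_x$.
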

\begin{proof}
	We have 
\begin{eqnarray}
\xi'(t)&= & \int_{\Omega} y_x\int_{0}^{\infty}\mu(s) [y_x(t)-y_x(t-s)]dsdx \nonumber\\
&-&  \int_{\Omega}\int_{0}^{\infty}\mu(s)y_{x}(t-s)ds \int_{0}^{\infty}\mu(s) [y_x(t)-y_x(t-s)]dsdx\nonumber\\
&-& \int_{\Omega}\vert y \vert^{p-2}y  \int_{0}^{\infty}\mu(s) [u(t)-u(t-s)]dsdx\nonumber\\
&+& \int_{\Omega}\vert y_x \vert^{p-2}y_x   \int_{0}^{\infty}\mu(s) [y_x(t)-y_x(t-s)]dsdx\nonumber\\
&-&\int_{\Omega}y'\int_{0}^{\infty}\mu'(s) [y(t)-y(t-s)]dsdx- (1-l) \int_{\Omega}y'^2dx.\nonumber
\end{eqnarray}
For $\nu >0$, we have
\begin{eqnarray}
&&\int_{\Omega} \ y_x\int_{0}^{\infty}\mu(s) [y_x(t)-y_x(t-s)]dsdx\nonumber\\
&&\leq \nu \int_{\Omega}\vert y_x\vert ^2 dx+\frac{1-l}{4\nu}\int_{\Omega} \int_{0}^{\infty}\mu(s) \vert y_x(t)-y_x(t-s)\vert^2dsdx,\nonumber
\end{eqnarray}
and
\begin{eqnarray}
&&\int_{\Omega}\int_{0}^{\infty}\mu(s)y_{x}(t-s)ds  \int_{0}^{\infty}\mu(s) [y_x(t)-y_x(t-s)]dsdx\nonumber\\
&&\leq 2\nu(1-l)^2 \int_{\Omega} \vert y_x\vert ^2 dx+(2\nu+\frac{1}{4\nu})(1-l)\int_{\Omega} \int_{0}^{\infty}\mu(s) \vert y_x(t)-y_x(t-s)\vert^2dsdx,\nonumber
\end{eqnarray}
we have the estimate
\begin{eqnarray}
&&\int_{\Omega}  \vert y(t)-y(t-s)\vert ^2 dx\nonumber\\
&&\leq 2\int_{\Omega} \vert y_x(t)\vert ^2
dx+2\int_{\Omega}\vert y_x(t-s)\vert^2 dx    \nonumber\\
&&\leq  4\sup_{s>0}\int_{\Omega} \vert y_x(t)\vert ^2
dx+2\sup_{\tau<0}\int_{\Omega} \vert y_x(\tau)\vert^2 dx \nonumber\\
&&\leq    \sup_{s>0}\int_{\Omega}  \vert y_x(t)\vert ^2
dx+2\sup_{\tau<0}\int_{\Omega} \vert y_{0x}(\tau)\vert^2 dx \nonumber\\
&&\leq     \frac{8}{1-l}E(0)+2m_0^2,  \label{N1}
\end{eqnarray}
where $\int_{\Omega} \vert y_{0x}(\tau)\vert^2 dx\leq m_0, m_0 \geq 0,\quad \forall \tau >0$. Since $p\geq2$, we have by using (\ref{p}) and the previous estimate
\begin{eqnarray}
&&\int_{\Omega}\vert y \vert^{p-2}y \int_{0}^{\infty}\mu(s) [y(t)-y(t-s)]dsdx\nonumber\\
&&\leq \nu \int_{\Omega}\Big \vert  \int_{0}^{\infty}\mu(s) \vert y(t)-y(t-s)\vert ds\Big\vert^pdx+c(\nu)\int_{\Omega}\vert y \vert^{p}dx\nonumber\\
&&\leq \nu (1-l)^{p-1}\int_{\Omega}    \int_{0}^{\infty}\mu(s) \vert y(t)-y(t-s)\vert^p ds dx+c(\nu)\int_{\Omega}\vert y \vert^{p}dx\nonumber\\
&&\leq\nu (1-l)^{p-1}c  \int_{0}^{\infty}\mu(s)\Big (\int_{\Omega}  \vert y_x(t)-y_x(t-s)\vert^2 dx\Big)^{p/2} ds+c(\nu)\int_{\Omega}\vert y \vert^{p}dx\nonumber\\
&& \leq \nu (1-l)^{p-1}c \Big( \frac{8}{1-l}E(0)+2m_0^2\Big)^{(p-2/2)} \int_{0}^{\infty}\mu(s) \int_{\Omega}  \vert y_x(t)-y_x(t-s)\vert^2 dx\nonumber\\
&&+c(\nu)\int_{\Omega}\vert y \vert^{p}dx\nonumber\\
&&\leq  \nu (1-l)^{p-1}c \Big( \frac{8}{1-l}E(0)+2m_0^2\Big)^{(p-2/2)} \int_{0}^{\infty}\mu(s) \int_{\Omega}  \vert y_x(t)-y_x(t-s)\vert^2 dx\nonumber\\
&&+c(\nu) \Big(\frac{2p}{(p-2)l}E(0)\Big)^{(p-2)/2}\int_{\Omega}\vert y_x \vert^{2}dx. \nonumber
\end{eqnarray}
Similarely to estimate
\begin{eqnarray}
&&\int_{\Omega}\vert y_x \vert^{p-2}y_x \int_{0}^{\infty}\mu(s) [y_x(t)-y_x(t-s)]dsdx\nonumber\\
&&\leq \nu \int_{\Omega} \Big \vert  \int_{0}^{\infty}\mu(s) \vert y_x(t)-y_x(t-s)\vert ds\Big\vert^pdx+c(\nu)\int_{\Omega}\vert y_x \vert^{p}dx\nonumber\\
&&\leq \nu (1-l)^{p-1}\int_{\Omega}  \int_{0}^{\infty}\mu(s) \vert y_x(t)-y_x(t-s)\vert^p ds dx+c(\nu)\int_{\Omega}\vert y_x \vert^{p}dx\nonumber\\
&&\leq\nu (1-l)^{p-1}c  \int_{0}^{\infty}\mu(s)\Big (\int_{\Omega}\vert y_x(t)-y_x(t-s)\vert^2 dx\Big)^{p/2} ds+c(\nu)\int_{\Omega}\vert y_x \vert^{p}dx\nonumber\\
&& \leq \nu (1-l)^{p-1}c \Big( \frac{8}{1-l}E(0)+2m_0^2\Big)^{(p-2/2)} \int_{0}^{\infty}\mu(s) \int_{\Omega}\vert y_x(t)-y_x(t-s)\vert^2 dx\nonumber\\
&&+c(\nu)\int_{\Omega}\vert y _x\vert^{2}dx.\nonumber
\end{eqnarray}
The nest term estimated as
\begin{eqnarray}
&&-\int_{\Omega}y'\int_{0}^{\infty}\mu'(s) [y(t)-y(t-s)]dsdx\nonumber\\
&& \leq \nu \int_{\Omega}\vert y'\vert ^2dx+\frac{c\mu(0)}{4\nu}\int_{\Omega} \int_{0}^{\infty}(-\mu'(s)) \vert y(t)-y(t-s)\vert
^2dsdx.\nonumber
\end{eqnarray}
A combination of all estimates gives 
\begin{eqnarray}
\xi'(t)&\leq & \nu \int_{\Omega}\vert y_x\vert ^2 dx+\frac{1-l}{4\nu}\int_{\Omega} \int_{0}^{\infty}\mu(s) \vert y_x(t)-y_x(t-s)\vert^2dsdx \nonumber\\
&+&  2\nu(1-l)^2 \int_{\Omega} \vert y_x\vert ^2 dx+(2\nu+\frac{1}{4\nu})(1-l)\int_{\Omega} \int_{0}^{\infty}\mu(s) \vert y_x(t)-y_x(t-s)\vert^2dsdx\nonumber\\
&+& \nu (1-l)^{p-1}c \Big( \frac{8}{1-l}E(0)+2m_0^2\Big)^{(p-2/2)} \int_{0}^{\infty}\mu(s) \int_{\Omega}  \vert y_x(t)-y_x(t-s)\vert^2 dx\nonumber\\
&&+c(\nu) \Big(\frac{2p}{(p-2)l}E(0)\Big)^{(p-2)/2}\int_{\Omega}\vert y_x \vert^{2}dx \nonumber\\
&+&  \nu (1-l)^{p-1}c \Big( \frac{8}{1-l}E(0)+2m_0^2\Big)^{(p-2/2)} \int_{0}^{\infty}\mu(s) \int_{\Omega}\vert y_x(t)-y_x(t-s)\vert^2 dx\nonumber\\
&&+c(\nu)\int_{\Omega}\vert y _x\vert^{2}dx\nonumber\\
&+&\nu \int_{\Omega}\vert y'\vert ^2dx+\frac{c\mu(0)}{4\nu}\int_{\Omega} \int_{0}^{\infty}(-\mu'(s)) \vert y(t)-y(t-s)\vert
^2dsdx- (1-l) \int_{\Omega}y'^2dx.\nonumber
\end{eqnarray} 
Then, for $\nu< (1-l)$
\begin{eqnarray}
\xi'(t)&\leq & -a   \int_{\Omega}\vert y_x\vert ^2 dx-((1-l)-\nu) \int_{\Omega}y'^2dx\nonumber\\
&+&b\int_{\Omega} \int_{0}^{\infty}\mu(s) \vert y_x(t)-y_x(t-s)\vert^2dsdx \nonumber\\  
&+& \frac{c\mu(0)}{4\nu}\int_{\Omega} \int_{0}^{\infty}(-\mu'(s)) \vert y(t)-y(t-s)\vert
^2dsdx.\nonumber
\end{eqnarray} 
where by (\ref{global condition}), we have
$$
a=c(\nu)\Big(1+2(1-l)^2- \Big(\frac{2p}{(p-2)l}E(0)\Big)^{(p-2)/2} \Big)>0
$$
and for all $\nu>0$, we have
$$
b=\frac{1-l}{4\nu}+(2\nu+\frac{1}{4\nu})(1-l)+2\nu (1-l)^{p-1}c \Big( \frac{8}{1-l}E(0)+2m_0^2\Big)^{(p-2/2)}>0.
$$
\end{proof}
Let define the functional
\begin{eqnarray}
L(t)=\varepsilon_1E(t)+ \varphi(t) +\varepsilon_2 \xi(t), \quad \varepsilon_1, \varepsilon_2>0 \label{L(t)}
\end{eqnarray}
We need the next lemma, which means that there is equivalent between
the Lyapunov and energy functions
\begin{lemma} \label{L3}
For $\varepsilon_1, \varepsilon_2>1$, we have
\begin{eqnarray}
L \sim E,
\end{eqnarray}
\end{lemma}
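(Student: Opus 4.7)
The plan is to establish two-sided bounds $c_1 E(t) \le L(t) \le c_2 E(t)$ by estimating $|\varphi(t)|$ and $|\xi(t)|$ separately by constant multiples of $E(t)$, and then choosing $\varepsilon_1$ large enough to absorb the resulting constants; the hypothesis ``$\varepsilon_1, \varepsilon_2 > 1$'' in the statement should therefore be read as ``$\varepsilon_1$ sufficiently large'' relative to $\varepsilon_2$ and the structural constants below. The tools are Young's inequality, Cauchy-Schwarz, the Poincar\'e inequality on $H_0^1(\Omega)$, and the fact, extracted from the proof of Theorem \ref{global theorem}, that the invariance $y(t)\in W$ gives
\begin{equation*}
\|y_x(t)\|_{L^2}^2 \le \frac{2p}{l(p-2)}\, E(t), \qquad \int_\Omega\!\!\int_0^\infty \mu(s)|y_x(t)-y_x(t-s)|^2\,ds\,dx \le \frac{2p}{p-2}\, E(t),
\end{equation*}
together with $\|y'\|_{L^2}^2,\ \|y'_x\|_{L^2}^2 \le 2E(t)$ directly from \eqref{Energy}.

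For $\varphi(t)$, I would apply Young to each of its three summands, namely $|\int yy'\,dx| \le \tfrac{1}{2}\|y\|_{L^2}^2 + \tfrac{1}{2}\|y'\|_{L^2}^2$ (followed by Poincar\'e to trade $\|y\|_{L^2}$ for $C_p \|y_x\|_{L^2}$) and the analogous bound on $|\int y_x y'_x\,dx|$. Combining with the displayed inequalities yields $|\varphi(t)| \le C_\varphi\, E(t)$ with $C_\varphi$ depending only on $p$, $l$ and $C_p$. For the first summand of $\xi(t)$, Cauchy-Schwarz on the inner memory integral, Young, and Poincar\'e applied to $y(t)-y(t-s)\in H_0^1$ give
\begin{equation*}
\Big|\int_\Omega y'\!\!\int_0^\infty \mu(s)[y(t)-y(t-s)]\,ds\,dx\Big| \le \tfrac{1}{2}\|y'\|_{L^2}^2 + \tfrac{(1-l)C_p^2}{2}\int_\Omega\!\!\int_0^\infty \mu(s)|y_x(t)-y_x(t-s)|^2\,ds\,dx,
\end{equation*}
and the right-hand side is again dominated by $E(t)$ via the two a priori bounds above.

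The remaining two summands of $\xi(t)$, which carry an outer $\int_0^t d\tau$, are to be treated in the same fashion after bounding the $y'_x$ and $y''_x$ factors in $L^2$ using the regularity provided by Theorem \ref{thm2.2}, producing a constant $C_\xi$ so that $|\xi(t)| \le C_\xi\, E(t)$. Combining, $|L(t) - \varepsilon_1 E(t)| \le (C_\varphi + \varepsilon_2 C_\xi)\, E(t)$, so choosing $\varepsilon_1 > C_\varphi + \varepsilon_2 C_\xi$ simultaneously produces the upper bound $L \le (\varepsilon_1 + C_\varphi + \varepsilon_2 C_\xi)\, E$ and the strictly positive lower bound $L \ge (\varepsilon_1 - C_\varphi - \varepsilon_2 C_\xi)\, E$, giving $L \sim E$. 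The main obstacle I foresee is the pair of $\int_0^t d\tau$ terms in $\xi(t)$: read literally they are cumulative in $t$ and would, without further structure, invalidate a pointwise bound by $E(t)$. The natural reading is that the outer $d\tau$ is a typographical artifact and $\xi$ is pointwise in $t$; otherwise one would have to integrate by parts in $\tau$ and exploit the sign-definiteness supplied by Lemma \ref{lemE'} to convert the $y''_x$ factor into a boundary contribution plus a dissipative remainder. Under either reading, the estimate closes and $L \sim E$.
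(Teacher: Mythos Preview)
Your approach is essentially the same as the paper's: bound $|L(t)-\varepsilon_1 E(t)|\le |\varphi(t)|+\varepsilon_2|\xi(t)|$ term by term via H\"older, Young and Poincar\'e, and then take $\varepsilon_1$ large enough. Your worry about the two $\int_0^t d\tau$ summands in $\xi$ is well placed---the paper's own proof carries those time integrals through the intermediate estimates and then simply asserts $|L(t)-\varepsilon_1 E(t)|\le c\,E(t)$, so the ``typographical artifact / pointwise in $t$'' reading you propose is exactly what is needed to make both arguments close.
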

\begin{proof}
	By (\ref{L(t)}) we have
	\begin{eqnarray}
	\vert L(t)-\xi_{1}E(t)\vert &\leq& \vert\varphi(t)\vert+\varepsilon_{2}\vert\xi_{2}(t)\vert \nonumber\\
	&\leq&\int_{\Omega} \vert y y'\vert dx+\frac{1}{2}\int_{\Omega} \vert y_x\vert ^2dx+ \int_{\Omega} \vert y_x  y'_{x}\vert dx\nonumber\\
	&+&  \int_{\Omega} \Big\vert  y'\int_{0}^{\infty}\mu(s) [y(t)-y(t-s)]ds\Big\vert dx \nonumber\\
	&+& \int_{0}^{t}\int_{\Omega} \Big\vert y'_x\int_{0}^{\infty}\mu(s) [y_x(t)-y_x(t-s)]dsd\tau \Big\vert dx\nonumber\\
	&+& \int_{0}^{t}\int_{\Omega}\Big\vert  y''_x\int_{0}^{\infty}\mu(s) [y_x(t)-y_x(t-s)]ds d\tau\Big\vert dx,\nonumber
	\end{eqnarray}
	Thanks to Holder and Young's inequalities, we have
	\begin{eqnarray}
	\int_{\Omega} \vert y y'\vert dx &\leq& \left(\int_{\Omega} \vert y \vert^2 dx\right)^{1/2}\left(\int_{\Omega}\vert  y'\vert^2 dx\right)^{1/2}\nonumber\\
	&\leq& \frac{1}{2}\left(\int_{\Omega} \vert y \vert^2 dx\right)+\frac{1}{2}\left(\int_{\Omega} \vert  y'\vert^2 dx\right)\nonumber\\
	&\leq& \frac{c}{2}\left(\int_{\Omega}\vert y_x \vert^2 dx\right)+\frac{1}{2}\left(\int_{\Omega}\vert  y'\vert^2 dx\right)\nonumber
	\end{eqnarray}
	Similarly
	\begin{eqnarray}
	\int_{\Omega} \vert y_x y'_x\vert dx
	\leq \frac{1}{2}\left(\int_{\Omega} \vert y _x \vert^2 dx\right)+\frac{1}{2}\left(\int_{\Omega}\vert  y'_x \vert^2 dx\right)\nonumber
	\end{eqnarray}
	and
	\begin{eqnarray}
	&& \int_{\Omega} \Big\vert  y'\int_{0}^{\infty}\mu(s) [y(t)-y(t-s)]ds\Big\vert dx \nonumber\\
	&&\leq \frac{1}{2}\left(\int_{\Omega} \vert  y'\vert^2 dx\right)+ \frac{c}{2}\int_{\Omega}  \int_{0}^{\infty}\mu(s)  \vert y_x(t)-y_x(t-s) \vert ^2dsdx\nonumber
	\end{eqnarray}
	and
	\begin{eqnarray}
	&&  \int_{0}^{t}\int_{\Omega}\Big\vert y'_x\int_{0}^{\infty}\mu(s) [y_x(t)-y_x(t-s)]dsd\tau \Big\vert dx\nonumber\\
	&&\leq \frac{1}{2}\int_{0}^{t}\left(\int_{\Omega} \vert  y'_x\vert^2 dx\right)d\tau+ \frac{1}{2}\int_{0}^{t}\int_{\Omega} \int_{0}^{\infty}\mu(s)  \vert y_x(t)-y_x(t-s) \vert ^2dsdxd\tau\nonumber
	\end{eqnarray}
	and
	\begin{eqnarray}
	&&  \int_{0}^{t}\int_{\Omega} \Big\vert  y''_x\int_{0}^{\infty}\mu(s) [y_x(t)-y_x(t-s)]ds d\tau\Big\vert dx\nonumber\\
	&&\leq \frac{1}{2}\int_{0}^{t}\left(\int_{\Omega} \vert  y''_x\vert^2 dx\right)d\tau+ \frac{1}{2}\int_{0}^{t}\int_{\Omega}  \int_{0}^{\infty}\mu(s)  \vert y_x(t)-y_x(t-s) \vert ^2dsdxd\tau\nonumber
	\end{eqnarray}
	Then, for some $c>0$, we have
	\begin{eqnarray}
	\vert L(t)-\varepsilon_{1}E(t)\vert &\leq& c(E(t) \nonumber 
	\end{eqnarray}
	Therefore, we can choose $\varepsilon_{1}$ so that
	\begin{eqnarray}
	L(t)\sim E(t).
	\end{eqnarray}
\end{proof}
\begin{lemma} \label{L4}
Assume (Hyp1) hold. Then there exist strictly positive constants $c$ such that
\begin{eqnarray}
L'(t) \leq - \varepsilon_{1}E(t)+c\int_{\Omega}\int_{0}^{\infty}\mu(s) \vert y_x(t)-y_x(t-s)\vert
^2dsdx \label{L'}
\end{eqnarray}
\end{lemma}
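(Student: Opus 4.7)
The plan is to differentiate $L(t)=\varepsilon_1 E(t)+\varphi(t)+\varepsilon_2\xi(t)$ and plug in the three differential estimates already provided by Lemma \ref{lemE'}, Lemma \ref{L1}, and Lemma \ref{L2} (with weights $\varepsilon_1$, $1$, $\varepsilon_2$), then tune the parameters $\nu,\varepsilon_1,\varepsilon_2$ so that every basic quantity except the $\mu$-memory term receives a negative coefficient large enough to absorb $\varepsilon_1 E(t)$. Summing gives coefficients $1-\varepsilon_2((1-l)-\nu)$ on $\int_\Omega (y')^2\,dx$, $1-\varepsilon_1$ on $\int_\Omega (y'_x)^2\,dx$, $-(m_1+\varepsilon_2 a)$ on $\int_\Omega y_x^2\,dx$ (with $m_1,a>0$ by the global condition (\ref{global condition})), and positive coefficient $\tfrac{1-l}{2l}+\varepsilon_2 b$ on the $\mu$-memory term.

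The delicate step is that the two $\mu'$-contributions have to cancel. Lemma \ref{lemE'} produces $\tfrac{\varepsilon_1}{2}\int\int\mu'(s)|y_x(t)-y_x(t-s)|^2\,ds\,dx\le 0$, and Lemma \ref{L2} produces $\tfrac{\varepsilon_2 c\mu(0)}{4\nu}\int\int(-\mu'(s))|y(t)-y(t-s)|^2\,ds\,dx$. Applying Poincar\'e to pass from $|y(t)-y(t-s)|$ to $|y_x(t)-y_x(t-s)|$ lets me combine the two into a single term with joint coefficient $\tfrac{\varepsilon_2 c\mu(0)C_P}{4\nu}-\tfrac{\varepsilon_1}{2}$. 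I select the free parameters in sequence: a small $\nu\in(0,1-l)$; then $\varepsilon_2>1$ large enough that $\varepsilon_2((1-l)-\nu)>1$; and finally $\varepsilon_1>1$ large enough to enforce both $\varepsilon_1>1$ and $\varepsilon_1\ge\varepsilon_2 c\mu(0)C_P/(2\nu)$. After these choices,
\begin{equation*}
L'(t)\le -K_1\int_\Omega (y')^2\,dx - K_2\int_\Omega (y'_x)^2\,dx - K_3\int_\Omega y_x^2\,dx + c_0\int_\Omega\int_0^\infty \mu(s)|y_x(t)-y_x(t-s)|^2\,ds\,dx
\end{equation*}
with positive constants $K_1,K_2,K_3,c_0$.

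Finally, I would upper-bound $E(t)$ by a linear combination of the four quantities on the right of the previous display. Starting from $E(t)=\tfrac12\int(y')^2+\tfrac12\int(y'_x)^2+J(t)$, the rearrangement $J(t)=\tfrac{p-2}{2p}\bigl[(1-\int_0^\infty\mu)\int y_x^2+\int\int\mu|y_x-y_x(t-s)|^2\bigr]+\tfrac{1}{p}I(t)$, combined with $y(t)\in W$ (hence $I(t)\ge 0$) and the embedding estimate (\ref{p}) for $\int|y|^p$, controls the $\int|y_x|^p$ piece hidden in $J(t)$ by $\int y_x^2$ and the $\mu$-memory term. Thus $\varepsilon_1 E(t)$ is bounded above by a linear combination of $\int(y')^2,\int(y'_x)^2,\int y_x^2$ and the $\mu$-memory term, and enlarging $c$ in (\ref{L'}) to absorb the memory excess yields the desired inequality.

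The principal obstacle is the interplay between $\varepsilon_1$ and the other constants. Requiring $\varepsilon_1$ large enough to dominate the Poincar\'e-enhanced $\mu'$-term forces $\varepsilon_1\gtrsim\varepsilon_2/\nu$, while $K_3\sim\varepsilon_2 a$ must in turn exceed the $\varepsilon_1$-multiple of the $\int y_x^2$-coefficient coming from $\varepsilon_1 E(t)$. The scalings happen to match, since $a\sim 1/\nu$ from the Young's inequality parameter in Lemma \ref{L2}, so all the constraints can be closed by choosing $\nu$ small first and then $\varepsilon_2$ and $\varepsilon_1$ commensurately large.
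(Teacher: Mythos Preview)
Your approach is essentially the paper's: differentiate $L=\varepsilon_1 E+\varphi+\varepsilon_2\xi$, insert the estimates from Lemmas~\ref{lemE'},~\ref{L1},~\ref{L2}, combine the two $\mu'$-terms via Poincar\'e, and then tune $\nu,\varepsilon_2,\varepsilon_1$. The paper's own proof is extremely terse---one sentence after displaying the combined inequality---so your added detail is welcome and goes beyond what the paper spells out.

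Two small corrections to your detail. First, when you bound $E(t)$ from above you must control $\tfrac{1}{p}\int_\Omega|y_x|^p\,dx$; the estimate (\ref{p}) you invoke treats $\int|y|^p$, not $\int|y_x|^p$, and the rearrangement $J=\tfrac{p-2}{2p}[\cdots]+\tfrac1p I$ together with $I\ge 0$ gives a \emph{lower} bound on $J$, not the upper bound you need. The correct route is the uniform $H^2$-bound from Theorem~\ref{global theorem}: since $y\in L^\infty(\mathbb{R}^+;H^1_0\cap H^2)$ and the domain is one-dimensional, $\|y_x\|_\infty\le C\|y\|_{H^2}\le C'$, whence $\int|y_x|^p\le C'^{\,p-2}\int y_x^2$. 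Second, the scaling tension in your last paragraph arises only because you read the ``$\varepsilon_1$'' in the conclusion as literally the Lyapunov weight. The paper (and the downstream use in the proof of Theorem~\ref{decay theorem}) only needs $L'\le -mE+c\int\!\!\int\mu|\cdot|^2$ for \emph{some} $m>0$; once $K_1,K_2,K_3>0$ are secured you may take $m$ small and the circular constraint you analyze disappears. Your claim that $a\sim 1/\nu$ is in fact not quite right (the Young constant scales like $\nu^{-1/(p-1)}$), but with the reinterpretation above this does not matter.
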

\begin{proof}
By Lemma \ref{lemE'}, Lemma \ref{L1} and Lemma \ref{L2}, we have   
\begin{eqnarray}
L'(t)&=&\varepsilon_1E'(t)+ \varphi'(t)+\varepsilon_2 \xi'(t)\nonumber\\
&\leq &-[\varepsilon_2[(1-l)- \nu]-1] \int_{0}^{L}\vert y'\vert^2dx -((1-\frac{\nu}{2})\varepsilon_1-1)\int_{\Omega}\vert y'_x\vert^2  dx\nonumber\\
&-&\Big[\frac{l}{2}  -\Big(\frac{2p}{(p-2)l}E(0)\Big)^{(p-2)/2}-\varepsilon_{2} a \Big]\int_{\Omega} \vert y_x\vert ^2dx\nonumber\\
&+&\Big[ \frac{ (1-l)}{2l}+b\varepsilon_2\Big]\int_{\Omega} \int_{0}^{\infty}\mu(s)\vert y_{x}(t-s)-  y_x \vert^2 ds dx \nonumber\\ 
&+&\Big[\frac{\varepsilon_1 }{2}-\varepsilon_2\frac{c\mu(0)}{4\nu}\Big]\int_{\Omega} \int_{0}^{\infty}\mu'(s) \vert y_x(t)-y_x(t-s)\vert
^2dsdx,\nonumber
\end{eqnarray} 
where, by (\ref{global condition}), we have
$$
a=c(\nu)\Big(1+2(1-l)^2- \Big(\frac{2p}{(p-2)l}E(0)\Big)^{(p-2)/2} \Big)>0
$$
and for all $\nu>0$ 
$$
b=\frac{1-l}{4\nu}+(2\nu+\frac{1}{4\nu})(1-l)+2\nu (1-l)^{p-1}c \Big( \frac{8}{1-l}E(0)+2m_0^2\Big)^{(p-2/2)}>0.
$$  
Now, we choose $\nu$, and whence this constant is fixed. we can choice $\varepsilon_1, \varepsilon_2$ small enough such that for $p\geq2$ there exist $c>0$ and (\ref{L'}) yields.
\end{proof}
\begin{lemma}\label{L5}
Assume that (Hyp1) hold. Then, there exist $\gamma, \gamma_0>0$ such that for all $t>0$
\begin{eqnarray}
\int_{\Omega}\int_{0}^{\infty}\mu(s) \vert y_x(t)-y_x(t-s)\vert ^2dsdx  \leq \frac{ -\gamma E'(t)}{H'(\gamma_0E(t))}+ \gamma\gamma_0E(t) \label{gamma}
\end{eqnarray}
\end{lemma}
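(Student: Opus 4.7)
The plan is to combine the dissipation identity from Lemma \ref{lemE'}, the uniform $H^1_0$-bound on the solution from Theorem \ref{global theorem}, and hypothesis \eqref{H}, via Jensen's inequality for the concave function $H^{-1}$ followed by a Fenchel--Young step with the Legendre conjugate $H^{*}$.

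Writing $B(t,s) := \int_\Omega |y_x(t)-y_x(t-s)|^2 \,dx$, the target quantity is $I(t) := \int_0^\infty \mu(s) B(t,s)\,ds$. Lemma \ref{lemE'}, after dropping the nonpositive $-\int |y'_x|^2$ contribution, yields
\begin{equation*}
\int_0^\infty (-\mu'(s))\, B(t,s)\,ds \leq -2E'(t),
\end{equation*}
and Theorem \ref{global theorem} together with the past-history bound (as in \eqref{N1}) gives a uniform constant $C_0 > 0$ with $B(t,s)\leq C_0$ for all $t,s$. Setting $\sigma(s) := \mu(s)/H^{-1}(-\mu'(s))$, hypothesis \eqref{H} places $\sigma$ in $L^{1}\cap L^{\infty}(\mathbb{R}^{+})$ with $\bar\sigma := \|\sigma\|_{L^1} < \infty$ and $\|\sigma\|_{L^\infty} < \infty$.

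The key step is to rewrite $I(t) = \int_0^\infty \sigma(s)\,H^{-1}(-\mu'(s))\,B(t,s)\,ds$ and apply Jensen's inequality for the concave function $H^{-1}$. Using $H^{-1}(0)=0$ together with the normalization $B(t,s)/C_0 \in [0,1]$, concavity gives $H^{-1}(-\mu'(s))\,B(t,s) \leq C_0\, H^{-1}(-\mu'(s) B(t,s)/C_0)$; averaging against the probability measure $\sigma\,ds/\bar\sigma$ and using $\sigma \leq \|\sigma\|_{L^\infty}$ yields
\begin{equation*}
I(t) \leq C_0\bar\sigma\, H^{-1}\!\left(\frac{2\|\sigma\|_{L^\infty}(-E'(t))}{C_0\bar\sigma}\right).
\end{equation*}
Next, a Fenchel--Young inequality $uv\leq H(u)+H^{*}(v)$ applied with $u = H^{-1}(-C_2 E'(t))$ and $v = H'(\gamma_0 E(t))$, combined with $H^{*}(H'(x)) = xH'(x)-H(x)\leq xH'(x)$, gives
\begin{equation*}
H^{-1}(-C_2 E'(t))\leq \frac{-C_2 E'(t)}{H'(\gamma_0 E(t))} + \gamma_0 E(t),
\end{equation*}
and multiplying by $C_0\bar\sigma$ and absorbing all resulting constants into a single $\gamma$ (taken large enough that both the $(-E')$-term and the $E$-term match the claim) yields the stated estimate.

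The main obstacle is the ordering of the Jensen step: one must insert the normalization $B/C_0 \leq 1$ \emph{before} invoking concavity of $H^{-1}$ so that $H^{-1}(\lambda z) \geq \lambda H^{-1}(z)$ for $\lambda\in[0,1]$ is applicable, and one then needs both the $L^1$ and the $L^\infty$ parts of \eqref{H} in tandem, the former to control the external integral and the latter to bound the argument of $H^{-1}$ by $-E'(t)$ alone rather than by a mixed quantity still depending on $s$.
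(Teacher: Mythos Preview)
Your argument is correct and rests on the same ingredients as the paper's proof: the uniform bound \eqref{N1} on $\int_\Omega|y_x(t)-y_x(t-s)|^2\,dx$, the concavity of $H^{-1}$, hypothesis \eqref{H} on $\sigma(s)=\mu(s)/H^{-1}(-\mu'(s))$, and the Fenchel--Young inequality with the Legendre conjugate $H^{*}$. The organization, however, is genuinely different. The paper applies Young's inequality \emph{pointwise} under the $s$-integral, with the specific choices
\[
A=H^{-1}\bigl(-r_2\mu'(s)B(t,s)\bigr),\qquad
B=\frac{r_1H'(\gamma_0E(t))\,\mu(s)B(t,s)}{H^{-1}\bigl(-r_2\mu'(s)B(t,s)\bigr)},
\]
and only afterwards invokes the monotonicity of $h(z)=z/H^{-1}(z)$ and \eqref{H} to control the $H^{*}$-term. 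You instead first use the concavity bound $\lambda H^{-1}(z)\le H^{-1}(\lambda z)$ together with Jensen's inequality against the probability measure $\sigma\,ds/\bar\sigma$ to collapse the whole $s$-integral to a single $H^{-1}$, and then apply Fenchel--Young once. Your route is arguably cleaner because it separates the averaging step from the conjugate-splitting step, and it makes transparent why both the $L^{1}$ and $L^{\infty}$ parts of \eqref{H} are needed; the paper's route avoids the explicit Jensen step but requires tracking the auxiliary function $h$. Either way, the resulting constant $\gamma$ is independent of $\gamma_0$, which is what is actually needed in the proof of Theorem~\ref{decay theorem}.
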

\begin{proof}
	Let $H^{*}$ be the convex conjugate of $g$ in the sense of Young (see \cite{Arnold}, pages 61-64), then
	\begin{eqnarray}
	H^{*}(s)&=&s(H')^{-1}(s)-H[(H')^{-1}(s)]\nonumber\\
	&\leq& s(H')^{-1}(s),\quad s\in (0, H'(r)) \label{g*}
	\end{eqnarray}
	and satisfies the following Young's inequality
	\begin{eqnarray}
	AB\leq H^{*}(A)+H(B),\quad A \in (0,H'(r)), B \in (0,r]. \label{young}
	\end{eqnarray}
	for
$$
A=H^{-1}\Big(-r_2\mu'(s)\int_{\Omega}\vert y_x(t)-y_x(t-s)\vert ^2dx \Big)
$$
$$
B=\frac{r_1H'(\gamma_0E(t))\mu(s)\int_{\Omega}  \vert y_x(t)-y_x(t-s)\vert ^2dx}{H^{-1}\Big(-r_2\mu'(s)\int_{\Omega} \vert y_x(t)-y_x(t-s)\vert ^2 dx \Big)}.
$$
Then, for $r_1, r_2>0$, we have (see \cite{IV2018})
\begin{eqnarray}
&&\int_{\Omega}\int_{0}^{\infty}\mu(s) \vert y_x(t)-y_x(t-s)\vert ^2dsdx\nonumber\\
&&=\frac{1}{r_1H'(\gamma_0E(t)) }\int_{0}^{\infty}\Big\{H^{-1}\Big(-r_2\mu'(s)\int_{\Omega} \vert y_x(t)-y_x(t-s)\vert ^2 dx\Big)   \nonumber\\
&& \times \qquad \frac{r_1H'(\gamma_0E(t))\mu(s)\int_{\Omega} \vert y_x(t)-y_x(t-s)\vert ^2 dx}{H^{-1}\Big(-r_2\mu'(s)\int_{\Omega}  \vert y_x(t)-y_x(t-s)\vert ^2 dx \Big)} \Big\}ds\nonumber\\
&&\leq -\frac{r_2}{r_1H'(\gamma_0E(t))}\int_{\Omega}\int_{0}^{\infty}\mu'(s) \vert y_x(t)-y_x(t-s)\vert ^2dsdx\nonumber\\
&&+\frac{1}{r_1H'(\gamma_0E(t))}\int_{0}^{\infty}H^*\Big( \frac{r_1H'(\gamma_0E(t))\mu(s)\int_{\Omega} \vert y_x(t)-y_x(t-s)\vert ^2 dx}{H^{-1}\Big(-r_2\mu'(s)\int_{\Omega} \vert y_x(t)-y_x(t-s)\vert ^2 dx \Big)} \Big)ds \nonumber
\end{eqnarray}
By (\ref{E'}), (\ref{g*}), we have
\begin{eqnarray}
&&\int_{\Omega}\int_{0}^{\infty}\mu(s) \vert y_x(t)-y_x(t-s)\vert ^2dsdx \leq \frac{2r_2}{r_1H'(\gamma_0E(t)) }E'(t)   \nonumber\\
&& +   \int_{0}^{\infty}\Big\{\frac{ \mu(s)\int_{\Omega}  \vert y_x(t)-y_x(t-s)\vert ^2 dx}{H^{-1}\Big(-r_2\mu'(s)\int_{\Omega} \vert y_x(t)-y_x(t-s)\vert ^2 dx \Big)} \nonumber\\
&&\times  H'^{-1}\Big( \frac{r_1H'(\gamma_0E(t))\mu(s)\int_{\Omega} \vert y_x(t)-y_x(t-s)\vert ^2 dx}{H^{-1}\Big(-r_2\mu'(s)\int_{\Omega}  \vert y_x(t)-y_x(t-s)\vert ^2 dx \Big)} \Big)\Big\}ds \nonumber
\end{eqnarray}
By the fact that $H^{-1}$ is concave and $H^{-1}(0)=0$, the function $h(s)=\frac{s}{H^{-1}(s)}$, such that for $0\leq s_1< s_2$, we have
$$
h(s_1)\leq h(s-2).
$$
Therefore, using (\ref{N1}) to get
\begin{eqnarray}
 &&H'^{-1}\Big( \frac{r_1H'(\gamma_0E(t))\mu(s)\int_{\Omega}  \vert y_x(t)-y_x(t-s)\vert ^2 dx}{H^{-1}\Big(-r_2\mu'(s)\int_{\Omega} \vert y_x(t)-y_x(t-s)\vert ^2 dx \Big)} \Big)\nonumber\\
 && =H'^{-1}\Big[\frac{r_1H'(\gamma_0E(t))\mu(s)}{-r_2\mu'(s)}h\Big(    -r_2\mu'(s)\int_{\Omega} \vert y_x(t)-y_x(t-s)\vert ^2 dx \Big)  \Big]\nonumber\\
 &&\leq H'^{-1}\Big[\frac{r_1H'(\gamma_0E(t))\mu(s)}{-r_2\mu'(s)}h\Big(    -r_2\mu'(s)\Big(\frac{8}{1-l}E(0)+2m_0^2\Big) \Big)  \Big]\nonumber\\
 && \leq  H'^{-1}\Big[\frac{r_1\Big(\frac{8}{1-l}E(0)+2m_0^2\Big)H'(\gamma_0E(t))\mu(s)}{H^{-1}\Big(    -r_2\mu'(s)\Big(\frac{8}{1-l}E(0)+2m_0^2\Big) \Big)  } \Big].\nonumber
\end{eqnarray}
Then,
\begin{eqnarray}
&&\int_{\Omega}\int_{0}^{\infty}\mu(s) \vert y_x(t)-y_x(t-s)\vert ^2dsdx\nonumber\\
&& \leq -\frac{2r_2}{r_1H'(\gamma_0E(t)) }E'(t)   \nonumber\\
&& + \Big(\frac{8}{1-l}E(0)+2m_0^2\Big)  \int_{0}^{\infty}\Big\{\frac{ \mu(s) }{H^{-1}\Big(-r_2\Big(\frac{8}{1-l}E(0)+2m_0^2\Big) \mu'(s)  \Big)} \nonumber\\
&&\times  H'^{-1}\Big[\frac{r_1\Big(\frac{8}{1-l}E(0)+2m_0^2\Big)H'(\gamma_0E(t))\mu(s)}{H^{-1}\Big(    -r_2\mu'(s)\Big(\frac{8}{1-l}E(0)+2m_0^2\Big) \Big)  } \Big]\Big\}ds \nonumber
\end{eqnarray}
By (Hyp1), we have
$$
\sup_{s \in \mathbb{R}^+}\frac{ \mu(s) }{H^{-1}\Big(-  \mu'(s)  \Big)}= \kappa_1 <\infty
$$
$$
 \int_{0}^{\infty}\frac{ \mu(s) }{H^{-1}\Big(-  \mu'(s)  \Big)}= \kappa_2 <\infty.
$$
Since $H'^{-1}$ is nondecreasing, we choose $r_1,r_2$ such that
\begin{eqnarray}
&&\int_{\Omega} \int_{0}^{\infty}\mu(s) \vert y_x(t)-y_x(t-s)\vert ^2dsdx\nonumber\\
&& \leq -\frac{2\kappa_2}{ H'(\gamma_0E(t)) }E'(t)   \nonumber + \Big(\frac{8}{1-l}E(0)+2m_0^2\Big) H'^{-1} H'(\gamma_0E(t)) \int_{0}^{\infty} \frac{ \mu(s) }{H^{-1}\Big(- \mu'(s)  \Big)} \nonumber \\
&&\leq -\frac{2\kappa_2}{ H'(\gamma_0E(t)) }E'(t)   \nonumber + \Big(\frac{8}{1-l}E(0)+2m_0^2\Big)\gamma_0E(t).  \nonumber
\end{eqnarray}
This completes the proof.
\end{proof}
\begin{proof} (of Theorem \ref{decay theorem})
	Multiplying (\ref{L'}) by $H'(\gamma_0E(t))$ and using results in (\ref{gamma})
	\begin{eqnarray}
	H'(\gamma_0E(t))L'(t) &\leq& - \varepsilon_{1}H'(\gamma_0E(t))E(t)+cH'(\gamma_0E(t))\int_{\Omega} \int_{0}^{\infty}\mu(s) \vert y_x(t)-y_x(t-s)\vert
	^2dsdx  \nonumber\\
	&\leq& - [\varepsilon_{1}- c\gamma\gamma_0]H'(\gamma_0E(t))E(t)-c\gamma E'(t) .  \nonumber\\
	\end{eqnarray}
	We choose $\gamma_0$ small enough so that $\varepsilon_{1}- c\gamma\gamma_0>0$. \\
	Put
	$$
	g(t)=H'(\gamma_0E(t))L(t)+c\gamma E(t) \sim E(t),
	$$
	then,
	$$
	g'(t)\leq -\kappa g(t)H'(\gamma_0g(t)),
	$$
	which implies that $(H_1(g))'\geq \kappa$, where
	$$
	H_1(\tau)=\int_{\tau}^{1}\frac{1}{sH'(\gamma_0 s)}ds, \quad 0<\tau <1.
	$$
	Integrating $(H_1(g))'\geq \kappa$ over $[0,t]$ we get
	$$
	g(t)\leq H_1^{-1}(\kappa t+ \kappa_0)
	$$
	the equivalence between $E(t)$ and $g(t)$ gives the result.
\end{proof}

\end{document}